\renewcommand{\theequation}{\thesection.\arabic{equation}}
\newtheorem{thm}{Theorem}[section]
\newtheorem{lem}[thm]{Lemma}
\newtheorem{rem}[thm]{Remark}
\begin{document}
\newcommand{\BX}{{\bf X}}
\newcommand{\cv}{{\cal V}}
\newcommand{\cW}{{\cal W}}
\newcommand{\co}{{\cal O}}

\renewcommand{\theequation}{\thesection.\arabic{equation}}
\def\@eqnnum{{\reset@font\rm (\theequation)}}

\newtheorem{theorem}{Theorem}[section]
\newtheorem{corollary}{Corollary}[section]
\newtheorem{lemma}{Lemma}[section]
\newtheorem{proposition}{Proposition}[section]
\newtheorem{conjecture}{Conjecture}[section]
\newtheorem{remark}{Remark}[section]
\newtheorem{definition}{Definition}[section]
\newtheorem{problem}{Problem}[section]

\def\abstract{
\advance \rightskip by 10mm
\advance \leftskip by 10mm
\vspace{-0.8em}
\noindent
\small{\bf Abstract.}
}
\def\endabstract{\par\normalsize\rm}

\def\Xint#1{\mathchoice
{\XXint\displaystyle\textstyle{#1}}%
{\XXint\textstyle\scriptstyle{#1}}%
{\XXint\scriptstyle\scriptscriptstyle{#1}}%
{\XXint\scriptscriptstyle\scriptscriptstyle{#1}}%
\!\int}
\def\XXint#1#2#3{{\setbox0=\hbox{$#1{#2#3}{\int}$}
\vcenter{\hbox{$#2#3$}}\kern-.5\wd0}}
\def\ddashint{\Xint=}
\def\dashint{\Xint-}

\def\a{\alpha}
\def\b{\beta}
\def\d{\delta}\def\D{\Delta}
\def\e{\epsilon}
\def\g{\gamma}\def\G{\Gamma}
\def\k{\kappa}
\def\lam{\lambda}\def\Lam{\Lambda}
\renewcommand\o{\omega}\renewcommand\O{\Omega}
\def\s{\sigma}\def\S{\Sigma}
\renewcommand\t{\theta}\def\vt{\vartheta}
\newcommand{\vphi}{\varphi}
\def\z{\zeta}

\newcommand{\tsigma}{\tilde{\s}}
\newcommand{\tbsigma}{\tilde{\bsigma}}
\def\te{\tilde{\e}}
\def\tu{\tilde{u}}

\newcommand{\bchi}{\mbox{\boldmath$\chi$}}
\newcommand{\bdelta}{\mbox{\boldmath$\delta$}}
\newcommand{\bepsilon}{\mbox{\boldmath$\epsilon$}}
\newcommand{\bfeta}{\mbox{\boldmath$\eta$}}
\newcommand{\bgamma}{\mbox{\boldmath$\gamma$}}
\newcommand{\bomega}{\mbox{\boldmath$\omega$}}
\newcommand{\bvphi}{\mbox{\boldmath$\varphi$}}
\newcommand{\bphi}{\mbox{\boldmath$\phi$}}
\newcommand{\bPhi}{\mbox{\boldmath$\Phi$}}
\newcommand{\bpsi}{\mbox{\boldmath$\psi$}}
\newcommand{\bPsi}{\mbox{\boldmath$\Psi$}}
\newcommand{\bsigma}{\mbox{\boldmath$\sigma$}}
\newcommand{\btau}{\mbox{\boldmath$\tau$}}
\newcommand{\bxi}{\mbox{\boldmath$\xi$}}
\newcommand{\brho}{\mbox{\boldmath$\rho$}}

\def\bk{\boldsymbol{\kappa}}
\def\bmu{\boldsymbol\mu}
\def\bxi{\boldsymbol{\xi}}
\def\bz{\boldsymbol{\zeta}}

\def\ba{{\bf a}}
\def\bb{{\bf b}}
\def\bc{{\bf c}}
\def\be{{\bf e}}
\def\bff{{\bf f}}
\def\bg{{\bf g}}
\def\bn{{\bf n}}
\def\bp{{\bf p}}
\def\bq{{\bf q}}
\def\bs{{\bf s}}
\def\bt{{\bf t}}
\def\bu{{\bf u}}
\def\bv{{\bf v}}
\def\bw{{\bf w}}
\def\bx{{\bf x}}
\def\by{{\bf y}}
\def\bzz{{\bf z}}

\def\bD{{\bf D}}
\def\bE{{\bf E}}
\def\bF{{\bf F}}
\def\bH{{\bf H}}
\def\bJ{{\bf J}}
\def\bV{{\bf V}}
\def\bU{{\bf U}}
\def\bW{{\bf W}}
\def\bX{{\bf X}}
\def\bY{{\bf Y}}

\def\cA{{\cal A}}
\def\cC{{\cal C}}
\def\cD{{\cal D}}
\def\cE{{\cal E}}
\def\cF{{\cal F}}
\def\cG{{\cal G}}
\def\cI{{\cal I}}
\def\cJ{{\cal J}}
\def\cK{{\cal K}}
\def\cL{{\cal L}}
\def\cO{{\cal O}}
\def\cP{{\cal P}}
\def\cQ{{\cal Q}}
\def\cR{{\cal R}}
\def\cS{{\cal \Sigma}}
\def\cT{{\cal T}}
\def\cU{{\cal U}}
\def\cV{{\cal V}}

\def\scT{{_\cT}}
\def\sD{{_D}}
\def\sE{{_E}}
\def\sF{{_F}}
\def\sFz{{_{F_z}}}
\def\sK{{_K}}
\def\sI{{_I}}
\def\sb{{_b}}
\def\sN{{_N}}

\def\curl{{{\bf curl} \ }}
\def\rot{{\mbox{rot}\ }}
\def\BPI{{\bf \Pi}}

\def\cth{\cT_h}
\def\ctH{\cT_H}

\def\tJ{\tilde{\J}}

\def\hK{\widehat{K}}
\def\hx{\widehat{x}}
\def\hy{\widehat{y}}
\def\bhv{\widehat{\bv}}

\def\l{\ell}
\def\bl{\boldsymbol{\ell}}
\def\col{\colon}
\def\f12{\frac12}
\def\dfrac{\displaystyle\frac}
\def\dint{\displaystyle\int}
\def\nab{\nabla}
\def\p{\partial}
\def\sm{\setminus}
\def\dsum{\displaystyle\sum}
\newcommand{\pp}[2]{\frac{\partial {#1}}{\partial {#2}}}
\def\bzero{{\bf 0}}

\def\divv{\nab\cdot}
\def\divx{\nab_x\cdot}
\def\divtx{\nab_{t,x}\cdot}
\def\nabx{\nab_x}

\newcommand{\grad}{\nabla}
\newcommand{\curlt}{{\nabla \times}}
\newcommand{\gperp}{\nabla^{\perp}}
\newcommand{\gradt}{\nabla\cdot}

\def\forallqq{\quad\forall\,}
\def\aph{A^{1/2}}
\def\amh{A^{-1/2}}

\def\osc{{\rm osc \, }}

\def\Im{{\rm Im}}
\newcommand{\tr}{{\rm tr}}
\def\divvr{{\rm div}}
\def\curllr{{\rm curl}}
\def\curll{{\rm curl}}
\def\curl{{\bf curl}}
\newcommand{\bgrad}{{\bf grad}}
\newcommand\diam{\mathrm{diam\,}}
\renewcommand\Im{\mathrm{Im\,}}
\def\Span{\mbox{Span}}
\def\supp{\mbox{supp\,}}
\newcommand{\trace}{{\rm trace}}

\newcommand{\tri}{|\!|\!|}
\newcommand{\ljump}{\lbrack\!\lbrack}
\newcommand{\rjump}{\rbrack\!\rbrack}
\newcommand{\bdm}{\begin{displaymath}}
\newcommand{\edm}{\end{displaymath}}
\newcommand{\beq}{\begin{equation}}
\newcommand{\eeq}{\end{equation}}
\newcommand{\beqa}{\begin{eqnarray}}
\newcommand{\eeqa}{\end{eqnarray}}
\newcommand{\beqas}{\begin{eqnarray*}}
\newcommand{\eeqas}{\end{eqnarray*}}
\newcommand{\ul}{\underline}
\newcommand{\wh}{\widehat}
\newcommand{\la}{\langle}
\newcommand{\ra}{\rangle}

\newcommand{\Lt}{L^2(\Omega)}
\newcommand{\Lts}{L^2(\Omega)^2}
\newcommand{\Ltc}{L^2(\Omega)^3}
\newcommand{\Ho}{H^1(\Omega)}
\newcommand{\Hoh}{H^1(\wh{\Omega})}
\newcommand{\Hoi}{H^1(\Omega_i)}
\newcommand{\Hos}{H^1(\Omega)^2}
\newcommand{\Hoc}{H^1(\Omega)^3}
\newcommand{\Hoch}{H^1(\wh{\Omega})^3}
\newcommand{\Hoci}{H^1(\Omega_i)^3}
\newcommand{\Hoz}{H^1_0(\Omega)}
\newcommand{\Ht}{H^2(\Omega)}
\newcommand{\Hti}{H^2(\Omega_i)}
\newcommand{\Hts}{H^2(\Omega)^2}
\newcommand{\Htc}{H^2(\Omega)^3}
\newcommand{\Htz}{H^0(\Omega)}
\newcommand{\Hh}{H^{1/2}(\Gamma)}
\newcommand{\Hhi}{H^{1/2}(\Gamma_i)}
\newcommand{\Hmh}{H^{-1/2}(\Gamma)}
\newcommand{\Hdiv}{H(\divvr;\,\Omega)}
\newcommand{\Hdivh}{H(\divv;\,\wh \Omega)}
\newcommand{\hcurl}{H(\curl\,A;\,\Omega)}
\newcommand{\Hcurl}{H(\curll\,A;\,\Omega)}
\newcommand{\Hcrl}{H(\curll\,;\,\Omega)}
\newcommand{\hcrl}{H(\curl\,;\,\Omega)}
\newcommand{\Hcrlh}{H(\curll\,;\,\wh\Omega)}
\newcommand{\hcrlh}{H(\curl\,;\,\wh\Omega)}
\newcommand{\Wdiv}{\BW_0(\mbox{\divv}\,;\,\Omega)}
\newcommand{\Wcurl}{\BW_0(\mbox{\curl}\,A;\,\Omega)}
\newcommand{\WcrossV}{\BW \times V}

\def\calS{{\cal S}}
\def\calT{{\cal T}}
\def\cB{{\cal B}}
\def\cH{{\cal H}}
\def\ba{{\mathbf{a}}}
\def\cM{{\cal M}}
\def\cN{{\cal N}}

\def\bE{{\bf E}}
\def\bS{{\bf S}}
\def\br{{\bf r}}
\def\bW{{\bf W}}
\def\bLambda{{\bf \Lambda}}

\newcommand{\lJump}{[\![}
\newcommand{\rJump}{]\!]}
\newcommand{\jump}[1]{[\![ #1]\!]}

\newcommand{\sd}{\bsigma^{\Delta}}
\newcommand{\st}{\tilde{\bsigma}}
\newcommand{\sh}{\hat{\bsigma}}
\newcommand{\rd}{\brho^{\Delta}}

\newcommand{\WH}{W\!H}
\newcommand{\NE}{N\!E}

\newcommand{\ND}{N\!D}
\newcommand{\BDM}{B\!D\!M}

\newcommand{\sT}{{_T}}
\newcommand{\sRT}{{_{RT}}}
\newcommand{\sBDM}{{_{BDM}}}
\newcommand{\sWH}{{_{WH}}}
\newcommand{\sND}{{_{ND}}}
\newcommand{\sV}{_\cV}
\newcommand{\sB}{_B}

\newcommand{\dd}{\underline{{\mathbf d}}}
\newcommand{\C}{\rm I\kern-.5emC}
\newcommand{\R}{\rm I\kern-.19emR}
\newcommand{\W}{{\mathbf W}}
\def\3bar{{|\hspace{-.02in}|\hspace{-.02in}|}}
\newcommand{\A}{{\mathcal A}}

\title{Div First-Order System LL* (FOSLL*)  
for Second-Order Elliptic Partial Differential Equations
\thanks{
This work performed under the auspices of the U.S. Department of Energy by 
Lawrence Livermore National Laboratory under Contract DE-AC52-07NA27344 (LLNL-JRNL-645325). 
This work was supported in part by the National Science Foundation
under grant DMS-1217081.}}
\author{Zhiqiang Cai\thanks{
Department of Mathematics, Purdue University, 150 N. University
Street, West Lafayette, IN 47907-2067, caiz@purdue.edu.}
\and Rob Falgout\thanks{Center for Applied Scientific Computing, Lawrence Livermore 
National Laboratory, Livermore, CA 94551-0808, falgout2@llnl.gov.}
\and Shun Zhang\thanks{Department of Mathematics, City University of Hong Kong, Hong Kong, shun.zhang@cityu.edu.hk.}}

\maketitle

\begin{abstract}
The first-order system LL* (FOSLL*) approach for general second-order elliptic partial 
differential equations was proposed and analyzed in \cite{CaMaMcRu:01}, 
in order to retain the full efficiency of 
the $L^2$ norm first-order system least-squares (FOSLS) approach while exhibiting the 
generality of the inverse-norm FOSLS approach.  
The FOSLL* approach in \cite{CaMaMcRu:01} was applied to the div-curl system with added slack variables, 
and hence it is quite complicated. 
In this paper, we apply the FOSLL* approach to the div system and establish its well-posedness.
For the corresponding finite element approximation, we obtain a quasi-optimal a priori error bound 
under the same regularity assumption as the standard Galerkin method, but without the 
restriction to sufficiently small mesh size.
Unlike the FOSLS approach, the FOSLL* approach does not have a free a posteriori error estimator, we then propose an explicit residual
error estimator and establish its reliability and efficiency bounds.
\end{abstract}

\smallskip

{\bf Key words.} LL* method, least-squares method, a priori error estimate, a posteriori error estimate, elliptic
equations.

\smallskip

{\bf AMS(MOS) subject classifications. 65M60, 65M15}
\pagestyle{myheadings}
\thispagestyle{plain}
\markboth{Z. Cai, R. Falgout and S. Zhang}{Div FOSLL* for 2nd-oder elliptic PDEs}

\section{Introduction}\label{sec-introduction}
\setcounter{equation}{0}

There are substantial interests in the use of least squares
principles for the approximate solution of partial differential
equations with applications in both solid and fluid mechanics. 
Many least-squares methods for the scalar elliptic partial differential equations
have been proposed and
analyzed, \cite{BocGun:09, Jia:98}. Their numerical properties depend on choices such as the
first-order system and the least-squares norm. Loosely speaking,
there are three types of least-squares methods: the inverse
approach, the div approach, and the div-curl approach. The inverse
approach employs an inverse norm that is further replaced by
either the weighted mesh-dependent norm (see \cite{aks}) or the
discrete $H^{-1}$ norm (see \cite{blp}) for computational
feasibility. Both the div and the div-curl approaches use the $L^2$ norm 
and the
corresponding homogeneous least-squares functionals are equivalent
to the $H(\divvr)\times H^1$ and the
$\left(H(\divvr)\cap H(\curllr)\right) \times H^1$ norms, respectively. 
The div approach based on the flux-pressure
formulation has been studied by many researchers (see, e.g.,
\cite{BoGu:05, clmm, Jes:77, PeCaLa:94}). The div-curl approach
\cite{CaMaMc:97} has also been well studied.

In order to retain the full efficiency of 
the $L^2$ norm first-order system least-squares (FOSLS) approach while exhibiting the 
generality of the inverse-norm FOSLS approach, the first-order system LL* (FOSLL*) approach for general second-order elliptic partial 
differential equations was proposed and analyzed in \cite{CaMaMcRu:01}.
 The FOSLL* approach 
 was applied to the div-curl system, whose adjoint system is an underdetermined system and hence
 is not suitable for FOSLL*. This difficulty was overcome by carefully adding 
 slack variables to the div-curl system. But the resulting approach is quite complicated.
 
Our purpose here is to study the FOSLL* approach applying to the div system. Without adding 
any slack variables to the div system, the resulting approach is much simpler than that in \cite{CaMaMcRu:01}.
By showing that the bilinear form of the FOSLL* approach is coercive and bounded
and that the linear form is bounded with respect to a weighted $H(\divvr)\times H^1$ norm,
we establish the well-posedness of the FOSLL* approach. Under the same regularity assumption as the standard Galerkin method, but without the restriction to sufficiently small mesh size, we obtain 
a quasi-optimal a priori error bound for the corresponding finite element approximation.
Note that this assumption is weaker than that for the div FOSLS \cite{CaKu:10}.
Unlike the FOSLS approach, the FOSLL* approach does not have a free a posteriori error estimator, 
thus we study an explicit residual
error estimator and establish its reliability and efficiency bounds.



The paper is organized as follows. In Section~2 we introduce
mathematical equations for the second-order scalar elliptic
partial differential equations and its div first-order system, and 
we then derive the FOSLL* variational formulation and 
establish its well-posedness.  In Section~3,
the FOSLL* finite element approximation 
is described. A priori and a posteriori error estimations are obtained in Sections~4 and 5 respectively. 
 In Section 6, we present numerical results.

\subsection{Notation}
\setcounter{equation}{0}

We use the standard notations and definitions for the Sobolev
spaces $H^s(\O)^d$ and $H^s(\partial\O)^d$ for $s\ge 0$. The
standard associated inner products are denoted by $(\cdot , \,
\cdot)_{s,\O}$ and $(\cdot , \, \cdot)_{s,\partial\O}$, and their
respective norms are denoted by $\|\cdot \|_{s,\O}$ and
$\|\cdot\|_{s,\partial\O}$. (We suppress the superscript $d$
because the dependence on dimension will be clear by context. We
also omit the subscript $\O$ from the inner product and norm
designation when there is no risk of confusion.) For $s=0$,
$H^s(\O)^d$ coincides with $L^2(\O)^d$. In this case, the inner
product and norm will be denoted by $\|\cdot\|$ and
$(\cdot,\,\cdot)$, respectively.  Set
 \[
 H^1_D(\Omega):=\{q\in H^1(\Omega)\, :\, q=0\,\,\mbox{on}\,\,\Gamma_D\}.
 \]
When $\Gamma=\partial\O$, denote $H^1_D(\Omega)$ by
$H^1_0(\Omega)$. Finally, set
\[
H(\divvr;\O)=\{\bv\in L^2(\O)^d\, :\,\gradt\bv\in L^2(\O)\},
\]
which is a Hilbert space under the norm
\[
\|\bv\|_{H(\divvr;\,\O)}=\left(\|\bv\|^2+\|\gradt\bv\|^2
 \right)^{1/2},
\]
and define the subspace
\[
H_N(\divvr;\O)=\{\bv\in H(\divvr;\O)\, :\,\bn\cdot\bv=0
\,\,\mbox{on}\,\,\Gamma_N\}.
\]

\section{First-Order System LL* Formulation}\label{sec3}
\setcounter{equation}{0}

Let $\Omega$ be a bounded, open, connected subset of $\Re^d$ ($d=2$
or $3$) with a Lipschitz continuous boundary $\partial\Omega$.
Denote by $\bn=(n_1,\, ...\, ,\,n_d)$ the outward unit vector normal to
the boundary. We partition the boundary of the domain $\Omega$ into
two open subsets $\Gamma_D$ and $\Gamma_N$ such that
$\partial\Omega=\bar{\Gamma}_D\cup\bar{\Gamma}_N$ and $\Gamma_D\cap
\Gamma_N=\emptyset$. For simplicity, we assume that $\Gamma_D$ is
not empty (i.e., $\mbox{mes}\,(\Gamma_D)\not= 0$) and is connected. 

\subsection{Second-Order Elliptic Problem}

Consider the following second-order elliptic boundary value
problem:
 \begin{equation}\label{scalar}
 -\nabla\cdot \,(A\nabla\, u ) + \bb\cdot \grad\, u+ a\,u =f
 \quad \mbox{in} \,\,\Omega
 \end{equation}
with boundary conditions
 \beq\label{bc1}
 u =g_{_D}\quad \mbox{on} \,\,\Gamma_D
 \quad\mbox{and}\quad
- A \nabla\, u\cdot\bn =g_{_N}\quad \mbox{on} \,\,\Gamma_N,
 \eeq
where the symbols $\nabla\cdot$ and $\nabla$ stand for the
divergence and gradient operators, respectively; $A$ is a given
$d\times d$ tensor-valued function; $\bb\in L^\infty(\Omega)^d$ and $a\in L^\infty(\Omega)$ 
are given vector- and scalar-valued
functions, respectively; and $f$ is a given scalar function. Assume
that $A$ is uniformly symmetric positive definite: there exist
positive constants $0 < \Lambda_0 \leq \Lambda_1$ such that
 \[
 \Lambda_0 \bxi^T\bxi \leq \bxi^TA \bxi
 \leq \Lambda_1 \bxi^T\bxi
 \]
for all $\bxi \in \Re^d$ and almost all $x \in \overline{\Omega}$.
The corresponding variational form of system
(\ref{scalar})-(\ref{bc1}) is to find $u\in H^1(\O)$ such that $u|_{\Gamma_D}=g_\sD$ and that
 \beq\label{s-galerkin}
 a(u,\,v)=(f,\,v)-\int_{\Gamma_N} g_\sD v\,ds \quad\forall\,\,v\in H^1_D(\O),
 \eeq
where the bilinear form is defined by
 \[
 a(u,\,v)=(A\grad\, u,\,\grad\, v)+(\bb\cdot \grad\, u+ a\,u,\,v).
 \]
The dual problem of (\ref{s-galerkin}) is to find $z\in H^1(\O)$
such that $z|_{\Gamma_D}=g_\sD$ and that
 \beq\label{s-dual}
 \hat{a}(\phi,\,z)= (f,\,v) -\int_{\Gamma_N}g_\sN v\,ds 
 \quad\forall\,\,\phi\in H^1_D(\O),
 \eeq
 where the bilinear form is defined by
 \[
 \hat{a}(\phi,\,z)=\left(\grad\,\phi,\, (A\,\grad +\bb)\,z\right)
 +(\phi,\,a\,z).
 \]
Assume that both problems
(\ref{s-galerkin}) and (\ref{s-dual}) have unique solutions and, 
for simplicity of the presentation, 
satisfy the full $H^2$
regularity estimates:
 \beq\label{reg-s}
 \|u\|_2\leq C\,\|f\|
 \quad\mbox{and}\quad
 \|z\|_2\leq C\,\|f\|.
 \eeq
Here and
thereafter, we use $C$ with or without subscripts in this paper to
denote a generic positive constant, possibly different at
different occurrences, that is independent of the mesh size $h$
but may
depend on the domain $\O$.

\subsection{First-Order System}
Introducing the flux (vector) variable
 \[
 \bsigma=-A\,\nabla u ,
 \]
the scalar elliptic problem in (\ref{scalar})-(\ref{bc1}) may be rewritten as
the following first-order partial differential system:
 \begin{equation}\label{fos-s}
 \left\{\begin{array}{lcclc}
 A^{-1}\bsigma+\nabla u & = & \bzero & \quad \mbox{in} \, &\Omega
 ,\\[2mm]
 \gradt\bsigma - \bb\cdot A^{-1}\bsigma +a\, u& = & f &\quad \mbox{in} \, &\Omega
 \end{array}
 \right.
 \end{equation}
with boundary conditions
 \begin{equation}\label{bc-s}
 u =g_{_D} \quad \mbox{on} \,\, \Gamma_D \quad\mbox{and}\quad
 \bn \cdot \bsigma= g_{_N}\quad \mbox{on} \,\, \Gamma_N.
 \end{equation}
 
 Let 
 \[
{\cal L} = \left(\begin{array}{cc}
A^{-1} & \grad \\[2mm]
\gradt -\bb\cdot A^{-1} &  a
\end{array}\right),
\quad 
{\cal U}=\left(\begin{array}{c}
\bsigma\\[2mm]
u
\end{array}\right),
\quad\mbox{and}\quad
{\cal F}=\left(\begin{array}{c}
\bzero\\[2mm]
f
\end{array}\right),
\]
then (\ref{fos-s}) may be rewritten as
  \begin{equation}\label{fos-s2}
 {\cal L}\, {\cal U}= {\cal F}.
 \end{equation}
 
 \subsection{Div FOSLL* Variational Formulation}
 
 Multiplying test function ${\cal V}=(\btau,\,v)^t\in H_N(\divvr;\O)\times H^1_D(\O)$,
 integrating over the domain $\Omega$, and using integration by parts, we have
\begin{eqnarray*}
 (f,\,v)
 &=& \langle  {\cal F},\, {\cal V} \rangle
 =\langle {\cal L}\, {\cal U},\, {\cal V} \rangle\\[2mm]
 &=& (A^{-1}\bsigma+\nabla u,\, \btau)
      + (\gradt\bsigma -\bb\cdot A^{-1}\bsigma + a\,u,\,v)\\[2mm]
  &=& (\bsigma,\,A^{-1}\btau)-(\bsigma,\,\grad\,v)
              +\int_{\partial\Omega}(\bsigma\cdot\bn)\,v\,ds
              - (\bsigma,\,A^{-1}\bb\,v)\\[2mm]
      && -(u,\,\gradt\,\btau) + \int_{\partial\Omega}(\btau\cdot\bn)\,u\,ds
           + (u,\,a\, v)\\[2mm]
   &=& \big(\bsigma,\, A^{-1}\btau - (\grad + A^{-1}\bb)\,v\big) 
     + \big(u,\, a\,v -\gradt\btau \big)
       + \int_{\Gamma_N} g_\sN v\,ds + \int_{\Gamma_D} g_\sD (\btau\cdot\bn)\,ds\\[2mm]
   &=& \langle  {\cal U},\,  {\cal L}^*\,{\cal V} \rangle + g(\btau,\,v),
\end{eqnarray*}
where the formal adjoint of ${\cal L}$ and the boundary functional are defined by
 \[
  {\cal L}^* = \left(\begin{array}{cc}
A^{-1} & -(\grad +A^{-1}\bb) \\[2mm]
-\gradt  &  a
\end{array}\right)
\quad\mbox{and}\quad
g(\btau,\,v)= \int_{\Gamma_N} g_\sN v\,ds + \int_{\Gamma_D} g_\sD (\btau\cdot\bn)\,ds,
\]
respectively.

Without loss of generality, we assume that $a\not= 0$ in this paper and let 
 \[
 \cA = \left(\begin{array}{cc}
A &  0\\[2mm]
0 &  a^{-1}
\end{array}\right) .
 \]
Let ${\cal W}=(\bfeta,\,w)^t$ satisfy 
 \beq\label{w}
 {\cal U}= 
\cA\, {\cal L}^*\,{\cal W}
=\left(\begin{array}{c}
\bfeta - (A\grad+\bb) \, w\\[2mm]
-a^{-1}\gradt  \bfeta + w
\end{array}\right), 
\eeq
then we have
 \[
  \langle \cA\, {\cal L}^*\,{\cal W},\,  {\cal L}^*\,{\cal V} \rangle  = (f,\,v) - g(\btau,\,v)
\equiv  f(\btau,\,v).
 \]
Now, our div FOSLL* variational formulation is to find $(\bfeta,\,w)\in 
H_N(\divvr;\O)\times H^1_D(\O)$ such that
\beq\label{fosll*}
 b(\bfeta,\,w;\,\btau,\,v)=f(\btau,\,v),
 \quad\forall\,\, (\btau,\,v)\in H_N(\divvr;\O)\times H^1_D(\O),
 \eeq
 where the bilinear form $b(\cdot,\,\cdot)$ is defined by
  \begin{eqnarray*}
   && b(\bfeta,\,w;\,\btau,\,v)
 =\langle \cA\, {\cal L}^*\,{\cal W},\,  {\cal L}^*\,{\cal V} \rangle \\[2mm]
  &=& \left(\bfeta -(A\,\grad +\bb)\, w,\,A^{-1}\btau - (\grad+A^{-1}\bb)\,v \right) 
   + (a^{-1}\gradt\,\bfeta - w,\, \gradt\,\btau-a\, v).
  \end{eqnarray*}
Note that both non-homogenous Dirichlet and Neumann boundary conditions are imposed weekly. 

\begin{rem}
For any $(\bfeta,\,w),\,\, (\btau,\,v)\in H_N(\divvr;\O)\times H^1_D(\O)$, integration by parts gives
 \[
 (\grad w,\btau)+(w, \gradt \btau) = (\grad v,\bfeta)+(v, \gradt \bfeta)= 0. 
 \]
Hence, the bilinear form $b(\cdot,\,\cdot)$ has of the form
 \begin{eqnarray*}
  b(\bfeta,\,w;\,\btau,\,v)
  &\!\!=\!\!& (A^{-1}\bfeta,\,\btau)+(a^{-1}\gradt\bfeta,\,\gradt\btau)
         +(A\grad w,\, \grad v) +(a\, w,\,v) \\[2mm]
  &&\!\!\! - (\bb\, w, \,A^{-1}\btau) -  
   (A^{-1}\bfeta,\, \bb\, v) + (\bb\, w,\, \grad v) + (\grad w,\, \bb\, v) 
 + (A^{-1}\bb\, w,\, \bb\, v) .
  \end{eqnarray*}
In the case that $\bb = \bzero$ and $a\not= 0$, i.e., the diffusion-reaction problem, the div FOSLL* problem  
in {\em (\ref{fosll*})} is decoupled. More specifically, $w\in H^1_D(\O)$ is the solution of
 \[
 (A\grad w,\, \grad v) +(a\, w,\,v)=(f,\,v)-\int_{\Gamma_N} g_{_N} v\,ds,
 \quad\forall\,\, v\in H^1_D(\O),
\]
and $\bfeta\in H_N(\divvr;\O)$ satisfies 
 \[
   (A^{-1}\bfeta,\,\btau)+(a^{-1}\gradt\bfeta,\,\gradt\btau)
 =-\int_{\Gamma_D} g_{_D} (\btau\cdot\bn)\,ds.
 \]
Note that the problem for $w$ is similar to the standard variational formulation for the 
diffusion-reaction problem, but the non-homogeneous Dirichlet boundary condition is weakly imposed here.
\end{rem}

\begin{rem}
In the case that $a= 0$,  let $\cA=\mbox{diag}\,(A,\,1)$,
then ${\cal W}=(\bfeta,\,w)^t$ satisfy 
 \beq\label{w}
 {\cal U}= \cA\, {\cal L}^* {\cal W}
= \left(\begin{array}{c}
\bfeta  -(A\grad+\bb) \,w\\[2mm]
-\gradt \bfeta
\end{array}\right).
\eeq
The corresponding bilinear form is modified as follows
$$
  b(\bfeta,\,w;\,\btau,\,v)=
   \big(\bfeta -(A\,\grad +\bb)\, w,\,A^{-1}\btau - (\grad+A^{-1}\bb)\,v \big) 
  + \big(\gradt\,\bfeta,\, \gradt\,\btau\big).
$$
\end{rem}

 \subsection{Well-Posedness}
 
 Denote by
 \[
 \tri v\tri_1 = \left(\|a^{1/2}v\|^2+\|A^{1/2}\grad\,v\|^2\right)^{1/2}
 \mbox{ and }\,\,
 \tri \btau\tri_{H(\divvr)} = \left(\|A^{-1/2}\btau\|^2+\|a^{-1/2}\gradt\btau\|^2 \right)^{1/2}
    \]
the weighted $H^1(\Omega)$ and $H(\divvr;\O)$ norms, respectively. Let
 \[
 \tri(\btau,\, v)\tri=\left(\tri v\tri^2_1 + \tri \btau\tri^2_{H(\divvr)} \right)^{1/2}.
 \]
The following theorem establishes the coercivity and continuity of the bilinear form.

\begin{thm}\label{co:equi}
 The bilinear form $b(\cdot,\,\cdot)$ is coercive and continuous in 
 $H_N(\divvr;\O)\times H^1_D(\O)$, i.e., there exist positive constant $\alpha$ and $C$,
 depending on bounds of the coefficients ($A$, $\bb$, and $a$),
 such that 
  \begin{equation}\label{coercivity}
  \alpha\,  \tri(\btau,\,\bv)\tri^2
  \leq b(\btau,\,v;\,\btau,\,v)
  \end{equation}
 and that 
  \begin{equation}\label{continuity}
 b(\bfeta,\,w;\,\btau,\,v)
 \leq C\,  \tri(\bfeta,\, w)\tri\, \tri(\btau,\, v)\tri
  \end{equation}
 for all $(\bfeta,\, u), \,\,(\btau,\,v)\in H_N(\divvr;\O)\times H^1_D(\O)$.
\end{thm}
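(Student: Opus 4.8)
The plan rests on the observation that, since $\cA=\mathrm{diag}(A,a^{-1})$ is symmetric positive definite, $b(\btau,v;\btau,v)=\langle\cA\,{\cal L}^*{\cal V},{\cal L}^*{\cal V}\rangle=\|\cA^{1/2}{\cal L}^*{\cal V}\|^2$ is a genuine squared $L^2$ norm of ${\cal L}^*{\cal V}$; thus continuity and coercivity are precisely the two inequalities comparing $\|\cA^{1/2}{\cal L}^*{\cal V}\|$ with $\tri(\btau,v)\tri$. For continuity I would first note that, by this identity and the Cauchy--Schwarz inequality in $L^2$,
\[
b(\bfeta,w;\btau,v)=\big(\cA^{1/2}{\cal L}^*{\cal W},\,\cA^{1/2}{\cal L}^*{\cal V}\big)\le b(\bfeta,w;\bfeta,w)^{1/2}\,b(\btau,v;\btau,v)^{1/2},
\]
so that (\ref{continuity}) follows once the one-sided bound $b(\btau,v;\btau,v)\le C\,\tri(\btau,v)\tri^2$ is established. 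That bound is routine: the two summands of $b(\btau,v;\btau,v)$ are $\|A^{-1/2}\btau-A^{1/2}\grad v-A^{-1/2}\bb\,v\|^2$ and $\|a^{1/2}v-a^{-1/2}\gradt\btau\|^2$, and the triangle inequality together with the coefficient bounds $\Lambda_0 I\le A\le\Lambda_1 I$, $0<a_0\le a\le a_1$ and $\|\bb\|_\infty<\infty$ — which give, e.g., $\|A^{-1/2}\bb\,v\|\le(\Lambda_0 a_0)^{-1/2}\|\bb\|_\infty\,\|a^{1/2}v\|$ — bound each summand by a constant multiple of $\tri(\btau,v)\tri^2$.

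For coercivity I would write $(\mathbf{g}_1,g_2):={\cal L}^*{\cal V}$, so that $b(\btau,v;\btau,v)=\|A^{1/2}\mathbf{g}_1\|^2+\|a^{-1/2}g_2\|^2$. The decisive ingredient is the integration-by-parts identity recorded in the remark following the definition of $b(\cdot,\cdot)$: for $(\btau,v)\in H_N(\divvr;\O)\times H^1_D(\O)$ one has $(\grad v,\btau)+(v,\gradt\btau)=0$, since $\btau\cdot\bn=0$ on $\Gamma_N$ and $v=0$ on $\Gamma_D$. Expanding $b(\btau,v;\btau,v)$ and invoking this identity, the principal cross terms $-2(\grad v,\btau)-2(v,\gradt\btau)$ cancel and one is left with
\[
b(\btau,v;\btau,v)=\tri(\btau,v)\tri^2-\|A^{-1/2}\bb\,v\|^2-2\big(A^{-1/2}\bb\,v,\;A^{1/2}\mathbf{g}_1\big),
\]
where $\|A^{1/2}\mathbf{g}_1\|^2\le b(\btau,v;\btau,v)$ by construction. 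In particular, when $\bb=\bzero$ this reduces to $b(\btau,v;\btau,v)=\tri(\btau,v)\tri^2$, giving coercivity at once — consistent with the decoupling of the formulation when $\bb=\bzero$ — so the entire difficulty is to absorb the convective remainder $-\|A^{-1/2}\bb\,v\|^2-2(A^{-1/2}\bb\,v,A^{1/2}\mathbf{g}_1)$ for general $\bb$.

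Absorbing that remainder is the main obstacle. A naive use of Young's inequality together with $\|A^{-1/2}\bb\,v\|^2\le(\Lambda_0 a_0)^{-1}\|\bb\|_\infty^2\,\tri(\btau,v)\tri^2$ only yields (\ref{coercivity}) under a smallness condition such as $\|\bb\|_\infty^2<\Lambda_0 a_0$; to dispense with it I would instead use the assumed unique solvability of the dual problem (\ref{s-dual}). From the two component equations $A^{-1}\btau-(\grad+A^{-1}\bb)v=\mathbf{g}_1$ and $-\gradt\btau+a\,v=g_2$ — testing the second against $\phi\in H^1_D(\O)$ and integrating by parts, using $\btau\cdot\bn=0$ on $\Gamma_N$ and $\phi=0$ on $\Gamma_D$ and then substituting $\btau=(A\grad+\bb)v+A\mathbf{g}_1$ from the first — one finds that $v\in H^1_D(\O)$ satisfies $\hat a(\phi,v)=(g_2,\phi)-(A\mathbf{g}_1,\grad\phi)$ for all $\phi\in H^1_D(\O)$. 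Well-posedness (inf--sup stability) of $\hat a$ then gives $\|v\|_1\le C(\|\mathbf{g}_1\|+\|g_2\|)$; the relations $\btau=(A\grad+\bb)v+A\mathbf{g}_1$ and $\gradt\btau=a\,v-g_2$ give $\|\btau\|_{H(\divvr;\O)}\le C(\|v\|_1+\|\mathbf{g}_1\|+\|g_2\|)$; and combining these with $b(\btau,v;\btau,v)=\|A^{1/2}\mathbf{g}_1\|^2+\|a^{-1/2}g_2\|^2\ge c(\|\mathbf{g}_1\|^2+\|g_2\|^2)$ yields $\tri(\btau,v)\tri^2\le C\,b(\btau,v;\btau,v)$, i.e. (\ref{coercivity}). (The constant $\alpha$ obtained this way of course also absorbs the stability constant of (\ref{s-dual}).)
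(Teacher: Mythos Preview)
Your argument is correct and reaches the same conclusion, but the route to coercivity differs from the paper's. For continuity both you and the paper simply appeal to Cauchy--Schwarz and the triangle inequality, so there is no difference there. For coercivity, however, the paper proceeds in two stages: a direct computation first yields the G\aa rding-type estimate
\[
\tri(\btau,v)\tri^2\le C\bigl(b(\btau,v;\btau,v)+\|v\|^2\bigr),
\]
and then a standard compactness/contradiction argument---extracting a Cauchy subsequence via the compact embedding $H^1_D(\Omega)\hookrightarrow L^2(\Omega)$ and using the assumed uniqueness for the dual problem $\hat a(\phi,z)=0\Rightarrow z=0$---removes the $\|v\|^2$ term and gives genuine coercivity. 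Your approach instead inverts the adjoint system explicitly: from ${\cal L}^*{\cal V}=(\mathbf{g}_1,g_2)$ you derive that $v$ satisfies $\hat a(\phi,v)=(g_2,\phi)-(A\mathbf{g}_1,\grad\phi)$ for all $\phi\in H^1_D(\Omega)$, apply an inf--sup bound for $\hat a$ to control $\|v\|_1$, and then recover $\btau$ from the component relations. This is more constructive and avoids the contradiction argument, and the identity $b(\btau,v;\btau,v)=\tri(\btau,v)\tri^2$ when $\bb=\bzero$ that drops out of your expansion is a pleasant by-product. One point to make explicit, though: the paper only \emph{assumes} unique solvability of the dual problem, not an a~priori inf--sup bound; to justify the step ``well-posedness (inf--sup stability) of $\hat a$ then gives $\|v\|_1\le C(\|\mathbf{g}_1\|+\|g_2\|)$'' you should remark that $\hat a$ satisfies a G\aa rding inequality, so that by the Fredholm alternative uniqueness implies the inf--sup estimate you use. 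Once that is stated, both proofs draw on exactly the same ingredient---well-posedness of the dual---packaged either as a compactness argument or as a Fredholm/inf--sup argument.
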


A similar result to that of Theorem~\ref{co:equi} was proved in \cite{clmm}. 
For the convenience of readers, we provide
a comprehensive proof here.

\begin{proof}
(\ref{continuity}) is a direct consequence of the
Cauchy-Schwarz and the triangle inequalities and the bounds of the coefficients ($A$, $\bb$, and $a$) of the underlying problem. To show the validity of
(\ref{coercivity}), we first establish that
 \beq\label{s-ell-div-1}
   \tri(\btau,\, v)\tri^2 \leq C\,\left(b(\btau,\,v;\,\btau,\,v)+\|v\|^2\right)
 \eeq
for all $(\btau,\,v)\in H_N(\divvr;\O)\times H^1_D(\O)$.
 To this end, integrating by parts gives
   \[
  (\btau,\,\grad\,v)=(-a^{-1/2}\gradt\btau + a^{1/2}v,\,a^{1/2}v)-(a\,v,\,v).
  \]
It then follows from the Cauchy-Schwarz inequality that 
 \beqas
 && \|A^{1/2}\grad\,v\|^2 +\|a^{1/2}v\|^2\\[2mm]
 &=& \left(A^{1/2}\grad\,v-A^{-1/2}(\btau-\bb\,v),\,A^{1/2}\grad\,v\right)
    + \left(a^{1/2}v -a^{-1/2}\gradt\btau ,\,a^{1/2}v\right)
     - \left(\bb\,v,\,\grad\,v\right)\\[2mm]
 &\leq & \left( \|A^{1/2}\grad\,v-A^{-1/2}(\btau- \bb\,v)\|+ C\, \|v\|\right) 
 \|A^{1/2}\grad\,v\|
    +\|a^{-1/2}\gradt\btau - a^{1/2}v\|\,\|a^{1/2}v\|,
 \eeqas
which 
implies
 \beq\label{s-tem1}
 \|A^{1/2}\grad\,v\|^2 +\|a^{1/2}v\|^2
 \leq C\,\left(b(\btau,\,v;\,\btau,\,v)+\|v\|^2\right).
 \eeq
By the triangle inequality and (\ref{s-tem1}), we have that
 \beqas
  \|A^{-1/2}\btau\|
  &\leq &\left(\|A^{-1/2}(\btau-\bb\,v)-A^{1/2}\grad\,v\|
  +\|A^{1/2}\grad\,v\| + C\, \|v\|\right)\\[2mm]
 & \leq & C\,\left(b(\btau,\,v;\,\btau,\,v)^{1/2}+\|v\|\right).
 \eeqas
 and that
 \[
 \|a^{-1/2}\gradt\btau\|
 \leq  \|a^{-1/2}\gradt\btau -a^{1/2}v\| +\|a^{1/2}v\|
  \leq C\, \left(b(\btau,\,v;\,\btau,\,v)+\|v\|^2\right).
 \]
Combining the above three inequalities yields (\ref{s-ell-div-1}).

With (\ref{s-ell-div-1}), we now show the validity of
(\ref{coercivity}) by the standard compactness argument. To this end, assume
that (\ref{coercivity}) is not true. This implies that there exists
a sequence $\{\btau_n,\,v_n\}\in H_N(\divvr;\,\O)\times H_D^1(\Omega)$ such
that
 \beq\label{assum}
 \tri \btau_n\tri^2_{H(\divvr)} +\tri v_n\tri^2_1 =1
 \quad\mbox{and}\quad
 b(\btau_n,\,v_n;\,\btau_n,\,v_n)\leq \dfrac{1}{n}.
 \eeq
Since $H^1_D(\Omega)$ is compactly contained in $L^2(\O)$, there exists a
subsequence $\{v_{n_k}\}\in H^1_D(\Omega)$ which converges in $L^2(\O)$. For
any $k,\,\,l$ and $(\btau_{n_k},\,v_{n_k})$,
$(\btau_{n_l},\,v_{n_l})\in H_N(\divvr;\,\O)\times H^1_D(\Omega)$, it follows
from (\ref{s-ell-div-1}) and the triangle inequality that
 \beqas
 &&\tri \btau_{n_k}-\btau_{n_l}\tri^2_{H({\rm div})}
  +\tri v_{n_k}-v_{n_l}\tri^2_{1,\,\O}\\[2mm]
  &\leq & C\left(b(\btau_{n_k}-\btau_{n_l},\,v_{n_k}-v_{n_l};\, 
  \btau_{n_k}-\btau_{n_l},\,v_{n_k}-v_{n_l})
   +\|v_{n_k}-v_{n_l}\|^2\right)\\[2mm]
 &\leq & C\left(b(\btau_{n_k},\,v_{n_k};\,\btau_{n_k},\,v_{n_k})
 +b(\btau_{n_l},\,v_{n_l};\,\btau_{n_l},\,v_{n_l})
   +\|v_{n_k}-v_{n_l}\|^2\right)\\[2mm]
  & \to & 0,
   \eeqas
as $k,\,\,l \to \infty$. This  implies that $(\btau_{n_k},\,v_{n_k})$ is a Cauchy sequence
in the complete space $H_N(\divvr;\,\O)\times H^1_D(\Omega)$. Hence, there
exists $(\btau,\,v)\in H_N(\divvr;\,\O)\times H^1_D(\Omega)$ such that
 \[
 \lim\limits_{k\to\infty}\left(\tri \btau_{n_k}-\btau\tri_{H({\rm div})}
 +\tri v_{n_k}-v\tri_{1}\right)=0.
 \]

Next, we show that
 \beq\label{s-0}
 v=0\quad\mbox{and}\quad \btau=\bzero,
 \eeq
which contradict with (\ref{assum}) that
 \[
 0=\tri \btau\tri^2_{H({\rm div})}+\tri v\tri^2_{1}
 =\lim\limits_{k\to\infty}
 \left( \tri \btau_{n_k}\tri^2_{H({\rm div})}+\tri v_{n_k}\tri^2_{1}\right) =1.
 \]
To this end, for any $\phi\in H^1_D(\Omega)$, integration by parts, the
Cauchy-Schwarz inequality, and (\ref{assum}) give
 \beqas
 \hat{a}(\phi,\,v_{n_k})
 &=&\left(\grad\,\phi,\, (A\,\grad + \bb)\, v_{n_k}\right)+(\phi,\,a\,v_{n_k})\\[2mm]
 &=& \left(\grad \phi,\, (A\,\grad +\bb)\,v_{n_k} - \btau_{n_k}\right)
 +(\phi,\, a\,v_{n_k}-\gradt\btau_{n_k})\\[2mm]
 &\leq & b(\btau_{n_k},\,v_{n_k};\,\btau_{n_k},\,v_{n_k})^{1/2}\tri \phi\tri_{1}
\leq \left(\dfrac{1}{n_k}\right)^{1/2} \tri \phi\tri_{1}.
 \eeqas
Since $\lim_{k\to\infty} v_{n_k}=v$ in $H^1(\O)$, we then have
 \[
 |\hat{a}(\phi,\, v)|=\lim\limits_{k\to\infty}|\hat{a}(\phi,\,v_{n_k})|
 \leq \lim\limits_{k\to\infty}
 \left(\dfrac{1}{n_k}\right)^{1/2}\tri \phi\tri_{1}=0.
 \]
Because (\ref{s-dual}) has a unique solution, we have that
 \[
 v=0.
 \]
Now, $\btau=\bzero$ follows from (\ref{s-ell-div-1}):
 \[
 \tri \btau\tri^2_{H({\rm div})}
 =\lim\limits_{k\to\infty}\tri \btau_{n_k}\tri^2_{H({\rm div})}
 \leq C\,\lim\limits_{k\to\infty}\left(
 b(\btau_{n_k},\,v_{n_k};\, \btau_{n_k},\,v_{n_k})
   +\|v_{n_k}\|^2\right)=0.
   \]
This completes the proof of (\ref{s-0}) and, hence, the theorem.
\end{proof}


\begin{thm}\label{wellposedness}
 The variational formulation in {\em (\ref{fosll*})} has a uniques solution  
 $(\bfeta,\,w)\in H_N(\divvr;\O)\times H^1_D(\O)$ satisfying the following a priori estimate
  \begin{equation}\label{apriori}
  \tri (\bfeta,\,w)\tri \leq C\,\left(\|f\|_{-1,\Omega}+\|g_\sD\|_{1/2,\Gamma_D}
    +\|g_\sN\|_{-1/2,\Gamma_N}\right).
    \end{equation}
 \end{thm}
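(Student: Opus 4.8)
The plan is to deduce Theorem~\ref{wellposedness} from Theorem~\ref{co:equi} via the Lax--Milgram lemma, so the only real work is to verify that the right-hand side $f(\btau,v) = (f,v) - g(\btau,v)$ defines a bounded linear functional on $H_N(\divvr;\O)\times H^1_D(\O)$ with respect to the norm $\tri(\btau,v)\tri$, and to get the a priori bound \eqref{apriori} with the sharp negative/fractional norms on the data. Existence and uniqueness are then immediate, and the estimate $\tri(\bfeta,w)\tri \le \alpha^{-1}\|f(\cdot,\cdot)\|_*$ follows from coercivity.

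First I would bound the three contributions to $f(\btau,v)$ separately. For the volume term, since $v\in H^1_D(\O)$ vanishes on $\Gamma_D$ and $\Gamma_D$ has positive measure, the Poincar\'e--Friedrichs inequality gives $\|v\|_{1,\O}\le C\tri v\tri_1 \le C\tri(\btau,v)\tri$, hence $(f,v)\le \|f\|_{-1,\O}\,\|v\|_{1,\O}\le C\|f\|_{-1,\O}\,\tri(\btau,v)\tri$. For the Neumann term $\int_{\Gamma_N} g_\sN v\,ds$, I would use the trace theorem $\|v\|_{1/2,\Gamma_N}\le C\|v\|_{1,\O}$ together with the duality pairing between $H^{-1/2}(\Gamma_N)$ and $H^{1/2}(\Gamma_N)$ to get $\le C\|g_\sN\|_{-1/2,\Gamma_N}\,\tri(\btau,v)\tri$. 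For the Dirichlet term $\int_{\Gamma_D} g_\sD(\btau\cdot\bn)\,ds$, the delicate point is that the normal trace $\btau\cdot\bn$ of an $H(\divvr)$ field lives in $H^{-1/2}(\partial\O)$, with $\|\btau\cdot\bn\|_{-1/2,\partial\O}\le C\|\btau\|_{H(\divvr;\O)}\le C\tri\btau\tri_{H(\divvr)}$; since $\btau\in H_N(\divvr;\O)$ its normal trace is supported on $\Gamma_D$, so the pairing $\langle g_\sD,\btau\cdot\bn\rangle$ makes sense and is bounded by $C\|g_\sD\|_{1/2,\Gamma_D}\,\tri\btau\tri_{H(\divvr)}$. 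Summing the three estimates yields $|f(\btau,v)|\le C\big(\|f\|_{-1,\O}+\|g_\sD\|_{1/2,\Gamma_D}+\|g_\sN\|_{-1/2,\Gamma_N}\big)\tri(\btau,v)\tri$, i.e.\ the functional is bounded with norm controlled by the data.

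With boundedness of $b(\cdot,\cdot)$ and of $f(\cdot,\cdot)$ in hand and coercivity from \eqref{coercivity}, the Lax--Milgram lemma produces a unique $(\bfeta,w)\in H_N(\divvr;\O)\times H^1_D(\O)$ solving \eqref{fosll*}, and testing with $(\btau,v)=(\bfeta,w)$ gives $\alpha\tri(\bfeta,w)\tri^2\le b(\bfeta,w;\bfeta,w)=f(\bfeta,w)\le C(\cdots)\tri(\bfeta,w)\tri$, which after dividing yields \eqref{apriori}. The main obstacle, such as it is, is the careful handling of the boundary functional: one must justify that for $\btau\in H_N(\divvr;\O)$ the expression $\int_{\Gamma_D} g_\sD(\btau\cdot\bn)\,ds$ is really the $H^{1/2}(\Gamma_D)$--$H^{-1/2}(\Gamma_D)$ duality pairing (not an honest integral) and that the constant in the normal-trace estimate is independent of $\btau$; this uses the connectedness and positive-measure assumptions on $\Gamma_D$ and a suitable extension of $g_\sD$ from $\Gamma_D$ to $\partial\O$. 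Everything else is a routine application of trace inequalities and Poincar\'e's inequality.
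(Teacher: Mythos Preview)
Your proposal is correct and follows essentially the same approach as the paper: bound the linear functional $f(\btau,v)$ via dual norms and the trace theorem, then invoke the Lax--Milgram lemma together with the coercivity and continuity of Theorem~\ref{co:equi}. Your write-up is in fact more explicit than the paper's about the Poincar\'e--Friedrichs step and the $H^{-1/2}$ normal-trace argument for the Dirichlet boundary term, but the strategy is identical.
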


\begin{proof}
For all $(\btau,\,v)\in H_\sN(\divvr;\O)\times H^1_D(\O)$,
it follows from the definition of the dual norms and the trace theorem that 
 \begin{eqnarray*}
 |f(\btau,\,v)|
 &\leq &\|f\|_{-1,\Omega}\|v\|_{1,\Omega} 
 + \|g_\sD\|_{1/2,\Gamma_D}\|\btau\cdot\bn\|_{-1/2,\Gamma_D}
 +\|g_\sN\|_{-1/2,\Gamma_N}\|v\|_{1/2,\Gamma_D}\\[2mm]
  &\leq & C\,\left(\|f\|_{-1,\Omega}+\|g_\sD\|_{1/2,\Gamma_D}
    +\|g_\sN\|_{-1/2,\Gamma_N}\right) \,\tri (\btau,\,v)\tri.
\end{eqnarray*}
Now,  by the Lax-Milgram lemma, the well
possedness of (\ref{fosll*}) 
and the a priori estimate in (\ref{apriori}) follow directly from Theorem~\ref{co:equi}.
\end{proof}

\begin{rem}
In the case that $a=0$,  the norms are modified as follows
 \[
 \tri v\tri_1 = \left(\|A^{1/2}\grad\,v\|^2\right)^{1/2}
 \quad\mbox{and}\quad
 \tri \btau\tri_{H(\divvr)} = \left(\|A^{-1/2}\btau\|^2+\|\gradt\btau\|^2 \right)^{1/2}.
  \]
 With these norms, all the results obtained in this paper for $a\neq 0$ hold.
\end{rem}

\section{Div FOSLL* Finite Element Approximation}\label{sec4}
\setcounter{equation}{0}

Theorem~\ref{co:equi} guarantees that confirming finite element
spaces of $H_N(\divvr;\O)\times H^1_D(\O)$ for the vector and scalar variables,
$\bfeta$ and $w$, may
be chosen independently. 
However, the only finite element spaces
having optimal approximations in terms of both the regularity
and the approximation property are the
continuous piecewise polynomials for the scalar variable and the
Raviart-Thomas (or Brezzi-Douglas-Marini) elements for the vector variable. 
(The BDM element has slight more degrees of freedom than that of the RT element.)
Moreover, the
system of algebraic equations resulting from these elements can be
solved efficiently by fast multigrid methods. For the above
reasons, only these elements are analyzed in this paper. But it is
easy to see that our analysis does apply to any other conforming
finite element spaces with no essential modifications.

For simplicity of presentation, we consider only triangular and
tetrahedra elements for the respective two and three dimensions.
Assuming that the domain $\O$ is polygonal, let $\cth$ be a
regular triangulation of $\O$ (see \cite{Cia:78}) with
triangular/tetrahedra elements of size $\co (h)$. Let $P_k(K)$ be
the space of polynomials of degree $k$ on triangle $K$ and denote
the local Raviart-Thomas space of order $k$ on $K$:
 \[
  RT_k(K)=P_k(K)^d +\bx\,P_k(K)
  \]
with $\bx=(x_1,\, ...,\,x_d)$. Then the standard $\Hdiv$
conforming Raviart-Thomas space of index $k$ \cite{rt} and the
standard (conforming) continuous piecewise polynomials of degree
$k+1$ are defined, respectively, by
\begin{eqnarray*}\label{S_h}
\Sigma^k_h&=&\{\btau\in H_N(\divvr; \O)\,:\,
 \btau|_K\in RT_k(K)\,\,\,\,\forall\,\,K\in\cth\},\\[2mm]
 \label{V_h} \qquad\quad
\mbox{and }\quad V^{k+1}_h&=&\{ v\in H_D^1(\O)\,:\, v|_K\in
 P_{k+1}(K)\,\,\,\,\forall\,\,K\in\cth\}
 \end{eqnarray*}
It is well-known (see \cite{Cia:78}) that $V^{k+1}_h$ has the
following approximation property: let $k\ge 0$ be an integer and
let $l\in [0,\,k+1]$
 \begin{equation}\label{app2}
  \inf_{v\in V^{k+1}_h}\|u -v\|_1
  \leq C\,h^l\,\|u\|_{l+1},
  \end{equation}
for $u\in H^{l+1}(\O)\cap H^1_D(\Omega)$. It is also well-known (see
\cite{rt}) that $\Sigma^k_h$ has the following approximation
property: let $k\ge 0$ be an integer and let $l\in [1,\,k+1]$
\begin{equation}\label{app1}
   \inf_{\btau\in\,\Sigma^k_h}\|\bsigma -\btau\|_{\Hdiv}
  \leq C\,h^l\left(\|\bsigma\|_{l}
  +\|\gradt\bsigma\|_l\right)
  \end{equation}
for $\bsigma\in H^{l}(\O)^{d} \cap H_N(\divvr;\Omega)$ with
$\gradt\bsigma \in H^{l}(\O)^m$.
Since $\bsigma$ and $\gradt\bsigma$ are one order less smooth than
$u$, we will choose $k$ to be the smallest integer greater than
or equal to $l-1$.

The finite element discretization of the FOSLL* variational
problem is: find $(\bfeta_h,\,w_h)\in \Sigma^k_h\times V^{k+1}_h$
such that
\begin{equation}\label{fem}
 b(\bfeta_h,\,w_h;\,\btau,\,v)
 =f(\btau,\,v),\quad \forall\,\,
 (\btau,\,v)\in \Sigma^k_h\times V^{k+1}_h.
 \end{equation}

Since $\Sigma^k_h\times V^{k+1}_h$ is a
subspace of $H_N(\divvr;\O)\times H_D^1(\O)$, the div FOSLL* problem in (\ref{fem}) is well-posed and the solution continuously depends on the data.

\begin{thm}\label{wellposedness-h}
 The variational formulation in {\em (\ref{fem})} has a uniques solution  
 $(\bfeta_h,\,w_h)\in \Sigma^k_h\times V^{k+1}_h$ satisfying the following a priori estimate
  \begin{equation}\label{apriori}
  \tri (\bfeta_h,\,w_h)\tri \leq C\,\left(\|f\|_{-1,\Omega}+\|g_\sD\|_{1/2,\Gamma_D}
    +\|g_\sN\|_{-1/2,\Gamma_N}\right).
    \end{equation}
 \end{thm}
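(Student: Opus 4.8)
The plan is to observe that (\ref{fem}) is nothing but the Galerkin restriction of (\ref{fosll*}) to the conforming subspace $\Sigma^k_h\times V^{k+1}_h\subset H_N(\divvr;\O)\times H^1_D(\O)$, so that the entire assertion follows from Theorem~\ref{co:equi} together with the boundedness of the linear functional $f(\cdot,\,\cdot)$ already established in the proof of Theorem~\ref{wellposedness}, by invoking the Lax--Milgram lemma on the finite-dimensional space. No new estimates are needed.

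First I would note that the coercivity bound (\ref{coercivity}) is a statement about a single argument $(\btau,\,v)$, so restricting it to $(\btau,\,v)\in\Sigma^k_h\times V^{k+1}_h$ immediately yields $\alpha\,\tri(\btau,\,v)\tri^2\le b(\btau,\,v;\,\btau,\,v)$ on the subspace with the \emph{same} constant $\alpha$; likewise, the continuity bound (\ref{continuity}) restricts to the subspace with the same constant $C$. It is essential to record that both constants depend only on $\O$ and on the bounds of the coefficients $A$, $\bb$, and $a$, hence are independent of the mesh size $h$. Next I would recall from the proof of Theorem~\ref{wellposedness} the bound
\[
|f(\btau,\,v)|\le C\,\big(\|f\|_{-1,\Omega}+\|g_\sD\|_{1/2,\Gamma_D}+\|g_\sN\|_{-1/2,\Gamma_N}\big)\,\tri(\btau,\,v)\tri,
\]
valid for all $(\btau,\,v)\in H_N(\divvr;\O)\times H^1_D(\O)$ and a fortiori for all $(\btau,\,v)\in\Sigma^k_h\times V^{k+1}_h$, with a constant that again depends only on $\O$.

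Since $\Sigma^k_h\times V^{k+1}_h$ is a finite-dimensional, hence closed and complete, subspace of the Hilbert space $H_N(\divvr;\O)\times H^1_D(\O)$ equipped with the norm $\tri\cdot\tri$, the Lax--Milgram lemma produces a unique solution $(\bfeta_h,\,w_h)\in\Sigma^k_h\times V^{k+1}_h$ of (\ref{fem}); testing (\ref{fem}) with $(\btau,\,v)=(\bfeta_h,\,w_h)$ and combining the coercivity bound with the bound on $f$ then gives the a priori estimate (\ref{apriori}) with an $h$-independent constant. There is essentially no genuine obstacle in this argument: the full content of the theorem is inherited from the continuous case purely by conformity, and one could equally well replace Lax--Milgram by the elementary fact that a coercive bilinear form on a finite-dimensional space is nonsingular. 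The only point deserving a word of care is that the constant $C$ in (\ref{apriori}) does not degenerate as $h\to 0$, which is guaranteed because $\alpha$ and the boundedness constant of $f$ depend only on $\O$ and on the coefficient bounds, not on the triangulation $\cth$.
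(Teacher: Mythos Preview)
Your proposal is correct and matches the paper's own argument: the paper does not give a separate proof but simply remarks, immediately before stating the theorem, that since $\Sigma^k_h\times V^{k+1}_h$ is a conforming subspace of $H_N(\divvr;\O)\times H^1_D(\O)$, the discrete problem inherits well-posedness and continuous dependence on the data from the continuous case. Your write-up spells out exactly this inheritance via Theorem~\ref{co:equi}, the bound on $f(\cdot,\cdot)$ from the proof of Theorem~\ref{wellposedness}, and Lax--Milgram, which is precisely what the paper intends.
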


 Now, the finite element approximation to $(\bsigma,\,u)$ is defined as follows:
  \begin{equation}\label{s-u-h}
  \bsigma_h=\bfeta_h-A\,\grad\,w_h-\bb\,w_h
  \quad\mbox{and}\quad
  u_h=-a^{-1}\gradt\,\bfeta_h + w_h.
  \end{equation}
  
 \begin{rem}
  When the coefficients ($A$, $\bb$, and $a$) are not polynomials, they can be replaced 
  by their approximations of appropriate polynomials locally, if piecewise polynomial   
 approximation to $(\bsigma,\,u)$ is desirable.
  \end{rem}
  
  \begin{rem}
  The FOSLL* approximation to the solution $u$ is not continuous. To obtain a continuous 
  approximation, one can simply project $u_h$ onto appropriate continuous finite element space. 
  \end{rem}

  \begin{rem}
 In the case that $a=0$,   the finite element approximation to $(\bsigma,\,u)$ is given by
  \[
  \bsigma_h=\bfeta_h-A\,\grad\,w_h-\bb\,w_h
\quad\mbox{and}\quad
 u_h=-\gradt\,\bfeta_h .
\]
  \end{rem}

\section{A Priori Error Estimate}

Difference between equations in (\ref{fosll*}) and (\ref{fem}) gives the error equation:
\begin{equation}\label{ortho1}
b(\bfeta-\bfeta_h,\,w-w_h;\,\btau,\,v) = 0
\quad\forall\,\, (\btau,\, v) \in \Sigma^k_h\times V^{k+1}_h.
\end{equation}
The following error estimation in the energy norm is a simple
consequence of Theorem 3.1, the error equation in (\ref{ortho1}),
the Cauchy-Schwarz inequality, and the approximation properties in
(\ref{app2}) and (\ref{app1}).

\smallskip
\begin{thm}\label{th:app}
Assume that the solution $(\bsigma,\,u)$ of {\em (\ref{fos-s})-(\ref{bc-s})}
is in $H^{l}(\O)^{d}\times H^{l+1}(\O)$ and that the solution 
$(\bfeta,\,w)$ of {\em (\ref{fosll*})} 
satisfies
 \beq\label{assumption1}
 \|\gradt\bfeta\|_{l}+\|\bfeta\|_{l} +\|w\|_{l+1}
 \leq C\, \left(\|A^{1/2}\bsigma\|_{l}+ \|a^{1/2}u\|_{l}\right).
 \eeq
Let $k$ be the smallest integer
greater than or equal to $l-1$. Then the FOSLL* approximation $(\bsigma_h,\,u_h)$
defined in {\em (\ref{s-u-h})} has
the following error estimate 
\beq\label{err1a}
 \|A^{1/2}(\bsigma -\bsigma_h)\|+\|a^{1/2}(u-u_h)\|
 \leq C\,h^l
 \left(\|\bsigma\|_{l}+\|u\|_{l}\right)
 \leq C\,h^l \|u\|_{l+1}.
\eeq
\end{thm}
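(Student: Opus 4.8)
The plan is to carry out a standard Céa-type argument adapted to the least-squares/LL* setting, combining the coercivity and continuity of $b(\cdot,\cdot)$ from Theorem~\ref{co:equi} with the Galerkin orthogonality in (\ref{ortho1}) and then translating the resulting estimate for $(\bfeta-\bfeta_h,\,w-w_h)$ back into an estimate for $(\bsigma-\bsigma_h,\,u-u_h)$. First I would observe that, by (\ref{w}) and (\ref{s-u-h}), the errors in the physical variables are exactly $(\bsigma-\bsigma_h,\,u-u_h) = \cA\,{\cal L}^*({\cal W}-{\cal W}_h)$, so that
\[
\|A^{1/2}(\bsigma-\bsigma_h)\|^2+\|a^{1/2}(u-u_h)\|^2 = b(\bfeta-\bfeta_h,\,w-w_h;\,\bfeta-\bfeta_h,\,w-w_h).
\]
Hence it suffices to bound the right-hand side, i.e. the energy-norm error of the FOSLL* solution.

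Next I would run the quasi-optimality argument: for any $(\btau,\,v)\in\Sigma^k_h\times V^{k+1}_h$, using coercivity (\ref{coercivity}), Galerkin orthogonality (\ref{ortho1}) with test function $(\bfeta_h-\btau,\,w_h-v)$, and continuity (\ref{continuity}), one gets
\[
\alpha\,\tri(\bfeta-\bfeta_h,\,w-w_h)\tri^2 \le b(\bfeta-\bfeta_h,\,w-w_h;\,\bfeta-\btau,\,w-v) \le C\,\tri(\bfeta-\bfeta_h,\,w-w_h)\tri\,\tri(\bfeta-\btau,\,w-v)\tri,
\]
which yields $\tri(\bfeta-\bfeta_h,\,w-w_h)\tri \le C\inf_{(\btau,v)}\tri(\bfeta-\btau,\,w-v)\tri$. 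Since the energy norm $b(\cdot;\cdot)^{1/2}$ controls (and is controlled by, up to constants) $\tri\cdot\tri$ by Theorem~\ref{co:equi}, the displayed identity above then gives
\[
\|A^{1/2}(\bsigma-\bsigma_h)\|+\|a^{1/2}(u-u_h)\| \le C\inf_{(\btau,v)\in\Sigma^k_h\times V^{k+1}_h}\tri(\bfeta-\btau,\,w-v)\tri.
\]

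To finish, I would apply the approximation properties (\ref{app1}) and (\ref{app2}): choosing $\btau$ to be the $\Hdiv$-interpolant of $\bfeta$ in $\Sigma^k_h$ and $v$ the interpolant of $w$ in $V^{k+1}_h$, and noting that $\tri\btau\tri_{H(\divvr)}\le C\|\btau\|_{\Hdiv}$ and $\tri v\tri_1\le C\|v\|_1$ by the boundedness of the coefficients, one obtains
\[
\inf_{(\btau,v)}\tri(\bfeta-\btau,\,w-v)\tri \le C\,h^l\left(\|\bfeta\|_l+\|\gradt\bfeta\|_l+\|w\|_{l+1}\right),
\]
provided $k$ is the smallest integer $\ge l-1$ so that the index requirements $l\in[1,k+1]$ and $l\in[0,k+1]$ are met. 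Invoking the regularity hypothesis (\ref{assumption1}) bounds the right-hand side by $C\,h^l(\|A^{1/2}\bsigma\|_l+\|a^{1/2}u\|_l)\le C\,h^l(\|\bsigma\|_l+\|u\|_l)$, and the elliptic regularity (\ref{reg-s}) (or rather the embedding $H^{l+1}\hookrightarrow H^l$ together with $\|\bsigma\|_l\le C\|u\|_{l+1}$ since $\bsigma=-A\nabla u$) gives the final bound $C\,h^l\|u\|_{l+1}$. The only genuinely delicate point is the first step — recognizing that the energy functional of the LL* problem evaluated at the error equals exactly the weighted $L^2$ error of the physical variables, via the change of variables (\ref{w}); everything after that is the routine Céa/interpolation machinery, with the coefficient-dependent norm equivalences needing only the uniform bounds on $A$, $\bb$, $a$ already assumed.
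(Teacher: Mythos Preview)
Your proposal is correct and follows essentially the same approach as the paper: C\'ea's lemma via coercivity, Galerkin orthogonality, and continuity (Theorem~\ref{co:equi} and (\ref{ortho1})), followed by the approximation properties (\ref{app2})--(\ref{app1}) and finally the regularity hypothesis (\ref{assumption1}). One minor slip: your displayed identity should read $\|A^{-1/2}(\bsigma-\bsigma_h)\|^2+\|a^{1/2}(u-u_h)\|^2 = b(\bfeta-\bfeta_h,\,w-w_h;\,\bfeta-\bfeta_h,\,w-w_h)$ (with $A^{-1/2}$, not $A^{1/2}$), since $\cA^{-1}=\mathrm{diag}(A^{-1},a)$; this is harmless because the uniform bounds on $A$ make the two weighted norms equivalent.
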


\begin{proof}
Let $(\bfeta_h,\,w_h)$ be the solution of (\ref{fem}). It follows from Theorem 2.1,
the error equation in (\ref{ortho1}), and the approximation properties in (\ref{app2}) and (\ref{app1}) that
\begin{eqnarray*}
\tri (\bfeta-\bfeta_h,\,w-w_h)\tri
&\leq & C\,\left(\inf_{\btau\in\,\Sigma^k_h}\|\bfeta -\btau\|_{\Hdiv}+
  \inf_{v\in V^{k+1}_h}\|w -v\|_1\right)\\ [2mm]
 &\leq & C\,h^l  \left(\|\gradt\bfeta\|_{l}+\|\bfeta\|_{l}+\|w\|_{l+1}\right),
\end{eqnarray*}
which, together with (\ref{assumption1}), implies (\ref{err1a}).
This completes the proof of the theorem.
\end{proof}

\begin{rem}
When $l=0$,  Assumption {\em (\ref{assumption1})} is the coercivity bound 
in {\em (\ref{coercivity})}.
\end{rem}

  \begin{rem}
In the case that $a=0$,   {\em (\ref{err1a})} becomes
\beq
 \|A^{1/2}(\bsigma -\bsigma_h)\|+\|(u-u_h)\|
 \leq C\,h^l
 \left(\|\bsigma\|_{l}+\|u\|_{l+1}\right)
 \leq C\,h^l \|u\|_{l+1}.
\eeq
  \end{rem}

\smallskip


\section{A Posteriori Error Estimate}

Unlike the FOSLS approach, the FOSLL* approach does not have a free a posteriori error estimator, thus in this section we study an explicit residual
error estimator and establish its reliability and efficiency bounds.


\subsection{Local Indicator and Global Estimator}

 Since the bilinear form $b(\cdot,\,\cdot)$ is coercive and continuous in 
 $H_N(\divvr;\O)\times H^1_D(\O)$ (see Theroem~\ref{co:equi}), the explicit residual a posteriori error estimator to be derived in this paper is a combination of those 
for the $H(\divvr)$
and the elliptic problems (see \cite{CNS:07, AiOd:00, Ver:96, Ver:13}).

To this end, we first introduce some notations. Denote by $\cE_\sK$ the set of edges/faces of element $K\in\cT_h$ and the set of edges/faces of the triangulation $\cT_h$ by
 $
 \cE_h := \cE_{_I}\cup\cE_{_D}\cup\cE_{_N}
 $,
 where $\cE_{_I}$ is the set of interior element edges, and $\cE_{_D}$ and $\cE_{_N}$ are the sets of boundary edges belonging to the respective $\Gamma_D$ and $\Gamma_N$. For each $e \in \cE$, denote by  $h_e$  the length/diameter of the edge/face $e$ and by $\bn_e$ a unit vector normal to $e$.  Let $K_e^{-}$ and $K_e^{+}$ be the two elements sharing the
common edge/face $e$ such that the unit outward normal vector of $K_e^{-}$ coincides with $\bn_e$. When $e \in \cE_{D} \cap \cE_{N}$, $\bn_{e}$ is the unit outward vector normal to $\p\O$ and denote by $K_e^{-}$ the element having the edge/face $e$. For a function $v$ defined on $K^{-}_e\cup K^{+}_e$, denote its traces on $F$ by $v|_e^{-}$ and $v|_e^{+}$, respectively. The jump over the edge/face $e$ is denoted by 
 $$
\jump{v}_e :=  \left\{
    \begin{array}{lll}
 	v|^{-}_e - v|_e^{+} \quad & e\in \cE_\sI,\\[2mm]
	v|^{-}_e  \quad & e \in \cE_{_D}\cup\cE_{_N}.
     \end{array}
\right.
$$
(When there is no ambiguity, the subscript or superscript $e$ in the designation of the jump  will be dropped.)  For a function $v$, we will use the following notations on the weighted 
$L^2$ norms:
 \[\begin{array} {rl}
  \|h\,v\|_{\cT_h} =\left( \sum\limits_{K\in\cT_h} \|h\,v\|_K^2\right)^{1/2}
 & \quad \mbox{where }\,\,
 \|h\,v\|_K= \|h_K\,v\|_{0,K} \quad\forall\,\, K\in\cT_h, \\[4mm]
\mbox{and}\quad
 \|h\,v\|_{\cE_h} =\left( \sum\limits_{e\in\cE_h} \|h\,v\|_e^2\right)^{1/2}
  & \quad \mbox{where }\,\,
  \|h\,v\|_e= \|h_e\,v\|_{0,e} \quad\forall\,\, e\in\cE_h.
 \end{array}\]

Let $(\bfeta_h,\,w_h)$ be the solution of (\ref{fem}) and let $(\bsigma_h,\,u_h)$ be the finite 
element approximation to $(\bsigma,\,u)$ defined in (\ref{s-u-h}). 
On each element $K\in\cT_h$, denote the following element residuals by
\[
r_1|_\sK=f -\gradt \bsigma_h+\bb\cdot A^{-1}\bsigma_h- a\,u_h,
\quad
\br_2|_\sK = A^{-1}\bsigma_h+\grad u_h,
\quad\mbox{and}\quad
r_3|_\sK= \curlt (A^{-1}\bsigma_h).
\]
Denote the following edge jumps by
  \[\begin{array}{llll}
   J_1|_e=\jump{\bsigma_h\cdot\bn},
 & J_2|_e  = \jump{u_h},
  &
   J_3|_e  = \jump{A^{-1}\bsigma_h\cdot\bt},
  &\mbox{ on } e\in\cE_I, \\[2mm]
  J_1|_e=0,
  &J_2|_e  = u_h-g_\sD,
  &
   J_3|_e  = \grad g_\sD\cdot\bt +A^{-1}\bsigma_h\cdot\bt,
  & \mbox{ on } e\in\cE_\sD, \\[2mm]
  J_1|_e=\bsigma_h\cdot\bn-g_\sN,
 & J_2|_e  =0,
  &
  J_3|_e  = 0,
 &\mbox{ on } e\in\cE_\sN.
\end{array}
\]
Let $\bar{\br}_2|_K$, $\bar{r}_1|_K$ and 
$\bar{r}_3|_K$, and $\bar{J}_i|_e$ ($i=1,\,2,\,3$) be the $L^2$-projections of  the respective
$\br_2|_K$, $r_1|_K$ and 
$r_3|_K$, and $J_i|_e$ ($i=1,\,2,\,3$) onto $P_k(K)^2$, $P_k(K)$, and $P_k(e)$, respectively. 
Now, the local error indicator on each element $K\in \cT_h$ is defined by
\begin{eqnarray}\nonumber
\eta_\sK^2 
 &=&   \|h\, \bar{r}_1\|_{K}^2 + \|h\, \bar{\br}_2\|_{K}^2 + \|h\, \bar{r}_3\|_{K}^2 
 +\dfrac{1}{2}\sum_{e\in\cE_\sI \cap  \cE_\sK}\left( \|h^{1/2} \bar{J}_1\|_{e}^2
 + \|h^{1/2} \bar{J}_3\|_{e}^2
 \right)
 \\[2mm]\label{indicator}
&&
 +\sum_{e\in\cE_\sD \cap  \cE_\sK} \|h^{1/2} \bar{J}_3\|_{e}^2
+\sum_{e\in\cE_\sN \cap \cE_\sK} \|h^{1/2} \bar{J}_1\|_{e}^2,
\end{eqnarray}
and the global error estimator is defined by 
 \beq\label{estimator}   
\eta^2 =\sum_{K\in\cT_h}\eta_\sK^2 
= \|h\,\bar{r}_1\|_{\cT_h}^2 + \|h\,\bar{\br}_2\|_{\cT_h}^2 
 + \|h\,\bar{r}_3\|_{\cT_h}^2
  +\|h^{1/2} \bar{J}_1\|_{\cE_h}^2 
+ \|h^{1/2} \bar{J}_3\|_{\cE_h}^2. 
  \eeq

The terms $r_1$ and $\br_2$ are the residuals of the equations in (\ref{fos-s}). The term 
$r_3$ measures the
violation of the fact that the exact quantity $-A^{-1}\bsigma = \grad u$ is in the kernel of $\curlt$ operator. The terms $J_1$, $J_2$, and $J_3$ are  due to the facts that the numerical 
flux $\bsigma_h$, the numerical solution $u_h$, and the numerical gradient 
$-A^{-1}\bsigma_h$
are not in $H(\divvr;\Omega)$, $H^1(\Omega)$, and $H(\curll;\Omega)$, respectively.



\subsection{Reliability and Efficiency Bounds}

For simplicity, we analyze only two dimensions here since there is no essential difficulties for 
three dimensions.
For a vector field $\btau=(\tau_1, \tau_2)^t$ and a scalar-value function $v$, define 
the respective curl operator and its formal adjoint by 
 \[
 \curlt \btau := \dfrac{\p \tau_2}{\p x_1} -\dfrac{\p \tau_1}{\p x_2}
\quad\mbox{and}\quad
 \gperp v:= (-\dfrac{\p v}{\p x_1}, \dfrac{\p v}{\p x_1})^t.
\]

Denote by $\Pi_h: H_N(\divvr;\O)\cap L^s(\O)^2 \rightarrow \Sigma^0_h$ with $s>2$ the $RT_0$ interpolation operator; i.e., for all $\btau \in H^1(\O)^2$, one has \cite{BoBrFo:13}
\begin{eqnarray} \label{propRT}
&& \langle  (\btau - \Pi_h \btau)\cdot\bn, v\rangle_e = 0, \quad \forall\,\, v\in P_0(e) \quad\mbox{and}\quad \forall\,\, e\in \cE.
\end{eqnarray}
Let $S_h$ be the standard continuous piecewise linear finite element space on the triangulation 
$\cT_h$. For $B=D$ or $N$, denote by 
$I_{\sB}: H_B^1(\O)\rightarrow H_B^1(\O)\bigcap S_h$ the Clement interpolation operator
which satisfies the following local approximation property \cite{Bra:07}:
$$
\|h^{-1}(v - I_{\sB} v)\|_{\cT_h}  \leq C\, \|\grad v\|  \quad \forall\,\, v\in H^1_B(\O) ,
$$
where $B=D$ or $N$. It is easy to check that  $\gperp (I_{\sN} v) \in \Sigma_h^0 $.

Let $(\bfeta,\,w)$ be the solution of (\ref{fosll*}), and let 
 \beq\label{error}
  \bE=\bfeta-\bfeta_h\in H_N(\divvr;\O)
 \quad\mbox{and}\quad
 e=w-w_h \in H^1_D(\O).
\eeq
By Lemmas 5.1 in \cite{CNS:07}, the $\bE$ has the following quasi-Helmholtz decomposition:
 \beq\label{QHD}
  \bE = \bphi +\gperp \psi \quad\mbox{in }\,\, \O,
 \eeq
where $\bphi\in H^1_{N}(\O)^2$ and $\psi \in H_N^1(\O)$. Moreover, there exists a 
constant $C>0$ such that
 \beq\label{QHD2}
 \|\nabla \bphi\|  \leq C\,\tri\bE\tri_{H(\divvr)} 
 \quad\mbox{and}\quad
 \|\nabla \psi\|  \leq C\,\tri \bE\tri_{H(\divvr)}.
 \eeq
Let 
 \[
 \bphi _h := \Pi_h \bphi,\quad
 \psi _h :=  I_{\sN} \psi,\quad
\mbox{and}\quad
e_h:=I_\sD e
\]
and let 
 \[
 \tilde{e}=e-e_h,\quad
 \tilde{\bphi}=\bphi-\bphi_h,\quad
 \mbox{and}\quad
\tilde{\psi}=\psi-\psi_h.
\]
By the approximation properties of the interpolation operators and (\ref{QHD2}), we have
\begin{eqnarray*}
 \|h^{-1} \tilde{e}\|_{\cT_h}+\|h^{-1/2} \tilde{e} \|_{\cE_h}
 &\leq & C\,\|\grad \,e\|   \leq C\, \tri e\tri_1,\\[2mm]
\|h^{-1} \tilde{\bphi}\|_{\cT_h} + \|h^{-1/2} \tilde{\bphi}\|_{\cE_h}  
 &\leq & C\,\|\grad \bphi\|   \leq C\, \tri \bE\tri_{H(\divvr)},\\[2mm]
 \mbox{and}\quad
 \|h^{-1} \tilde{\psi}\|_{\cT_h}  + \|h^{-1/2} \tilde{\psi}\|_{\cE_h}
 &\leq & C\,\|\grad \psi\|   \leq C\, \tri \bE\tri_{H(\divvr)}. 
\end{eqnarray*}

\begin{lem} \label{rep}
We have the following error representation:
 \beq \label{errorrep}
b(\bE,e; \bE, e) 
 = \sum_{K\in\cT_h} \big\{ (r_1,\,\tilde{e})_\sK - (\br_2,\, \tilde{\bphi})_\sK 
 -  (r_3, \,\tilde{\psi})_\sK\big\} 
 + \sum_{e\in\cE_h} \big\{ \langle J_1, \,\tilde{e}\rangle_e 
+ \langle J_2, \tilde{\bphi}\cdot\bn\rangle_e+ \langle J_3, \tilde{\psi}\rangle_e\big\}.
\eeq
\end{lem}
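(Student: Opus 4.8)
The plan is to combine the Galerkin orthogonality (\ref{ortho1}), the quasi-Helmholtz decomposition (\ref{QHD})--(\ref{QHD2}) of $\bE$, and element-wise integration by parts, in the spirit of the standard residual analyses for $H(\divvr)$- and $H^1$-elliptic problems. \emph{Step 1 (insert the decomposition and use orthogonality).} Using bilinearity of $b$ in the test argument together with $\bE=\bphi+\gperp\psi$, I would first split
\[
b(\bE,e;\bE,e)=b(\bE,e;\bphi,0)+b(\bE,e;\gperp\psi,0)+b(\bE,e;0,e).
\]
The three discrete objects $(\bphi_h,0)=(\Pi_h\bphi,0)$, $(\gperp\psi_h,0)=(\gperp I_\sN\psi,0)$ and $(0,e_h)=(0,I_\sD e)$ all lie in $\Sigma^0_h\times(S_h\cap H^1_D(\O))\subset\Sigma^k_h\times V^{k+1}_h$ — here one uses that $\gperp(I_\sN\psi)\in\Sigma^0_h$ and that the continuous $P_1$ space sits inside $V^{k+1}_h$ — so by the error equation (\ref{ortho1}) each may be subtracted, leaving
\[
b(\bE,e;\bE,e)=b(\bE,e;\tilde\bphi,0)+b(\bE,e;\gperp\tilde\psi,0)+b(\bE,e;0,\tilde e).
\]

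\emph{Step 2 (make the residual functional computable).} For a generic $(\btau,v)\in H_N(\divvr;\O)\times H^1_D(\O)$ I would subtract (\ref{fem}) from (\ref{fosll*}); since (\ref{s-u-h}) says precisely that $\cA\,{\cal L}^*(\bfeta_h,w_h)^t=(\bsigma_h,u_h)^t$, this gives
\[
b(\bE,e;\btau,v)=f(\btau,v)-\big(\bsigma_h,\,A^{-1}\btau-(\grad+A^{-1}\bb)v\big)-\big(u_h,\,-\gradt\btau+a\,v\big),
\]
whose right-hand side involves only the computed pair $(\bsigma_h,u_h)$ and the data.

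\emph{Step 3 (integrate by parts, three times).} I would apply Step 2 with the three test pairs of Step 1 and integrate over each $K\in\cT_h$. With $(0,\tilde e)$: collecting the $v$-terms and integrating $(\bsigma_h,\grad\tilde e)$ by parts yields the interior residual $r_1$, and since $\tilde e$ is single-valued and vanishes on $\Gamma_D$ the face contributions assemble into $J_1$ on $\cE_{_I}\cup\cE_{_N}$ (the $\Gamma_N$-part of $f(\btau,v)$ supplying the $-g_\sN$). With $(\tilde\bphi,0)$: integrating $(u_h,\gradt\tilde\bphi)$ by parts yields $\br_2$, and since $\tilde\bphi$ has a single-valued normal trace with $\tilde\bphi\cdot\bn=0$ on $\Gamma_N$ the face terms assemble into $J_2$ on $\cE_{_I}\cup\cE_{_D}$ paired with $\tilde\bphi\cdot\bn$ (the $\Gamma_D$-part of $f(\btau,v)$ supplying the $-g_\sD$). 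With $(\gperp\tilde\psi,0)$: here $\gradt(\gperp\tilde\psi)=0$, so only $-(A^{-1}\bsigma_h,\gperp\tilde\psi)$ and the $\Gamma_D$-part of $f$ survive; the two-dimensional integration-by-parts identity relating $\int_K A^{-1}\bsigma_h\cdot\gperp\tilde\psi$ to $\int_K\curlt(A^{-1}\bsigma_h)\,\tilde\psi$ and a boundary term $\int_{\p K}(A^{-1}\bsigma_h\cdot\bt)\,\tilde\psi$ produces $r_3=\curlt(A^{-1}\bsigma_h)$ and, since $\tilde\psi$ is single-valued and vanishes on $\Gamma_N$, the tangential jump $J_3$ on $\cE_{_I}\cup\cE_{_D}$; on $\cE_{_D}$ one rewrites $-\int_{\Gamma_D}g_\sD\,\gperp\tilde\psi\cdot\bn=-\int_{\Gamma_D}g_\sD\,\p_t\tilde\psi$ and integrates by parts once along the curve $\Gamma_D$ — the endpoint terms vanishing since $\tilde\psi=0$ where $\Gamma_D$ meets $\Gamma_N$ — which supplies the $\grad g_\sD\cdot\bt$ in $J_3|_{\cE_{_D}}$. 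Summing the three contributions gives exactly (\ref{errorrep}).

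\emph{The main obstacle} is the $\gperp\tilde\psi$ computation in Step 3: one must fix the orientation of the edge tangents consistently so that the interior face integrals collapse into $\jump{A^{-1}\bsigma_h\cdot\bt}$ with the stated sign, and so that, after the one-dimensional integration by parts along $\Gamma_D$, the boundary term reproduces $J_3|_{\cE_{_D}}$ with $\grad g_\sD\cdot\bt$ entering as in its definition. Everything else is routine bookkeeping of boundary integrals against the Dirichlet and Neumann data already built into $f(\btau,v)$.
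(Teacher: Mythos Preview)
Your proposal is correct and follows essentially the same route as the paper: Galerkin orthogonality to replace $(\bE,e)$ by $(\tilde\bphi+\gperp\tilde\psi,\tilde e)$, the identity $\cA\,{\cal L}^*(\bfeta_h,w_h)^t=(\bsigma_h,u_h)^t$ to express the residual functional through $(\bsigma_h,u_h)$, and elementwise integration by parts for each of the three test components, with the tangential integration by parts along $\Gamma_D$ producing the $\grad g_\sD\cdot\bt$ part of $J_3$. The only cosmetic difference is that the paper keeps the test function bundled as $(\tilde\bphi+\gperp\tilde\psi,\tilde e)$ and splits instead the trial side into $b(\bfeta,w;\cdot)=f(\cdot)$ and $b(\bfeta_h,w_h;\cdot)$ before differencing, whereas you split the test side into three pieces; the computations are identical.
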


\begin{proof}
Since  $\bE_h:= \bphi _h +\gperp \psi _h \in \Sigma_h^0\subset \Sigma_h^k$
and $e_h\in V^1_h\subset V^{k+1}_h$, the error equation in (\ref{ortho1}) gives
 \beq\label{ortho2}
 b(\bE,e;\,\bE,e) 
 = b(\bE,e;\,\bE-\bE_h,e-e_h)
 =b(\bE,e;\,\tilde{\bphi} +\gperp \tilde{\psi},\tilde{e}).
\eeq
By the fact that
$\gradt\gperp \tilde{\psi}=0$, the definitions of
the bilinear form $b(\cdot,\,\cdot)$, and the FOSLL* finite element approximation 
$(\bsigma_h,\,u_h)$, we have
 \begin{eqnarray*}
 && b(\bfeta_h, w_h;\,\tilde{\bphi} +\gperp \tilde{\psi},\tilde{e})
 = (\bsigma_h,\,  A^{-1}(\tilde{\bphi} +\gperp \tilde{\psi}) 
  - \grad \tilde{e}- A^{-1}\bb\, \tilde{e})
 -(u_h, \gradt \tilde{\bphi} - a\,\tilde{e})\\[2mm]
 &&\qquad  =( A^{-1}\bsigma_h,\,\tilde{\bphi} +\gperp \tilde{\psi})
-(u_h,\,\gradt\tilde{\bphi})
      -( \bsigma_h,\,\grad \tilde{e}) -(\bb^tA^{-1}\bsigma_h-a\,u_h,\,\tilde{e}).
\end{eqnarray*}
It follows from integration by parts and the boundary conditions that
\begin{eqnarray*}
 ( A^{-1}\bsigma_h,\, \gperp \tilde{\psi})
 &=& \!\!\sum_{K\in \cT_h} \big(\grad\times(A^{-1}\bsigma_h),\, \tilde{\psi})_K
   -\!\!\sum_{e\in \cE_\sI} \langle \jump{A^{-1}\bsigma_h\cdot\bt},\, \tilde{\psi} \rangle_e
  -\!\!\sum_{e\in \cE_D} \langle A^{-1}\bsigma_h\cdot\bt,\, \tilde{\psi} \rangle_e,\\[2mm]
(u_h,\,\gradt\tilde{\bphi})
 &=& -\sum_{K\in\cT_h}(\grad u_h,\, \tilde{\bphi})_K 
  + \sum_{e\in \cE_\sI} \langle \jump{u_h}, \,\tilde{\bphi}\cdot\bn \rangle_e
  + \sum_{e\in \cE_D} \langle u_h, \,\tilde{\bphi}\cdot\bn \rangle_e,\\ [2mm]
  ( \bsigma_h,\,\grad \tilde{e}) 
 &=& -\sum_{K\in \cT_h} (\gradt \bsigma_h,\, \tilde{e})_K 
  + \sum_{e\in \cE_\sI} \langle \jump{\bsigma_h\cdot\bn},\, \tilde{e}\rangle_e 
  + \sum_{e\in \cE_N}\langle \bsigma_h\cdot\bn,\, \tilde{e}\rangle_e, 
\end{eqnarray*}
which, together with the definitions of the residuals and the jumps, lead to
 \begin{eqnarray*}
   && b(\bfeta_h, w_h; \tilde{\bphi}+\gperp \tilde{\psi}, \tilde{e}) \\[2mm]
  &=&\sum_{K\in \cT_h} \left\{\big(r_3,\,\tilde{\psi}\big)_K
    + \big( \br_2,\,\tilde{\bphi}\big)_K - \big( r_1,\,\tilde{e}\big)_K\right\}
  +\sum_{K\in \cT_h} \big( f,\,\tilde{e}\big)_K
  -\sum_{e\in \cE_I} \langle J_3,\, \tilde{\psi} \rangle_e
 \\[2mm]
 &-& \!\!\sum_{e\in \cE_D} \langle A^{-1}\bsigma_h\cdot\bt,\, \tilde{\psi} \rangle_e
   -\sum_{e\in \cE} \langle J_2,\, \tilde{\bphi}\cdot\bn\rangle_e
  -\sum_{e\in \cE} \langle J_1,\, \tilde{e} \rangle_e
  - \sum_{e\in \cE_\sD} \langle g_\sD,\, \tilde{\bphi}\cdot\bn \rangle_e
 -\sum_{e\in \cE_\sN} \langle g_\sN,\, \tilde{e} \rangle_e.
\end{eqnarray*}
By (\ref{fosll*}), integration by parts, 
and the boundary condition of $\tilde{\psi}\in H^1_N(\O)$, we have
 \begin{eqnarray*}
  b(\bfeta, w; \tilde{\bphi}+\gperp \tilde{\psi}, \tilde{e})
 &=&(f,\,\tilde{e})-\langle g_\sN, \tilde{e}\rangle _{\Gamma_N} 
 - \langle g_\sD, (\tilde{\bphi} + \gperp \tilde{\psi}) \cdot \bn\rangle _{\Gamma_D}\\[2mm]
 &=&(f,\,\tilde{e})-\langle g_\sN, \tilde{e}\rangle _{\Gamma_N} 
 - \langle g_\sD, \,\tilde{\bphi} \cdot \bn\rangle _{\Gamma_D}
 +\langle \grad g_\sD\cdot \bt, \,\tilde{\psi}\rangle _{\Gamma_D}.
\end{eqnarray*}
Now, (\ref{errorrep}) is a direct consequence of (\ref{ortho2}) and
the difference of the above two equalities. This completes the
proof of the lemma.
\end{proof}

Define the local and global oscillations as follows:
\begin{eqnarray}
\nonumber
\osc_k^2(K) &=& \|h\, (r_1-\bar{r}_1)\|_{K}^2 + \|h\, (\br_2-\bar{\br}_2)\|_{K}^2 + \|h\, (r_3-\bar{r}_3)\|_{K}^2 +\|h^{1/2} (J_1-\bar{J}_1)\|_{\p K}^2\\[2mm]
&& + \|h^{1/2} (J_2-\bar{J}_2)\|_{\p K}^2  + \|h^{1/2} (J_3-\bar{J}_3)\|_{\p K}^2\\[2mm]
\mbox{and}\quad \osc_k^2(\cT_h) &=& \sum_{K\in\cT_h} \osc_k^2(K),
\end{eqnarray}
respectively. 

\begin{thm} {\em (Reliability Bound)} The global estimator $\eta$ defined in {\em (\ref{estimator})} is reliable; i.e., there exists a positive constant $C$ such that 
\beq\label{reliability}
  \|A^{-1/2}(\bsigma-\bsigma_h)\|  +\|a^{1/2}(u-u_h)\|  
  \leq C\, \tri (\bE, e)\tri  
 \leq C\, (\eta +\osc_k(\cT_h)).
\eeq
\end{thm}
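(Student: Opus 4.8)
The plan is to prove the reliability bound in two stages. First, the left inequality $\|A^{-1/2}(\bsigma-\bsigma_h)\| + \|a^{1/2}(u-u_h)\| \le C\,\tri(\bE,e)\tri$ should follow directly from the definitions of $\bsigma_h$ and $u_h$ in \eqref{s-u-h} together with the relation \eqref{w}: since $\bsigma-\bsigma_h = \bE - (A\grad + \bb)e$ and $u - u_h = -a^{-1}\gradt\bE + e$, we get $A^{-1/2}(\bsigma-\bsigma_h) = A^{-1/2}\bE - A^{1/2}\grad e - A^{-1/2}\bb\,e$ and $a^{1/2}(u-u_h) = -a^{-1/2}\gradt\bE + a^{1/2}e$, and bounding each term by the triangle inequality and the coefficient bounds yields the claim (this is essentially the continuity half of the norm equivalence implicit in Theorem~\ref{co:equi}).

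The second, main stage is the right inequality $\tri(\bE,e)\tri \le C(\eta + \osc_k(\cT_h))$. I would start from the coercivity estimate \eqref{coercivity}, which gives $\alpha\,\tri(\bE,e)\tri^2 \le b(\bE,e;\bE,e)$, and then invoke the error representation formula \eqref{errorrep} from Lemma~\ref{rep}. Each of the six groups of terms on the right of \eqref{errorrep} is a sum of $L^2$ inner products of a residual (or jump) against one of $\tilde e$, $\tilde\bphi$, $\tilde\bphi\cdot\bn$, $\tilde\psi$. The strategy for each term is the standard one: insert the $L^2$-projection by writing $(r_1,\tilde e)_K = (\bar r_1, \tilde e)_K + (r_1 - \bar r_1, \tilde e)_K$, apply Cauchy--Schwarz on each element/edge, and pull out the mesh-weight so that the factor multiplying the residual is $h_K^{-1}\tilde e$ (or $h_e^{-1/2}\tilde e$ for edge terms), which is controlled by $\|\grad e\| \le \tri e\tri_1$ via the approximation properties of the Clement and $RT_0$ interpolants listed just before the lemma. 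After summing over $K$ and $e$ and using discrete Cauchy--Schwarz across the mesh, every term is bounded by $C\,(\eta + \osc_k(\cT_h))\,\tri(\bE,e)\tri$ (using also \eqref{QHD2} to pass from $\|\grad\bphi\|,\|\grad\psi\|$ to $\tri\bE\tri_{H(\divvr)}$); dividing through by $\tri(\bE,e)\tri$ finishes the proof.

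One subtlety I would be careful about: the estimator $\eta$ in \eqref{estimator} does \emph{not} include the $J_2$ jump terms (the $u_h - g_\sD$ and $\jump{u_h}$ terms), nor the element residual $r_2$ paired against interior contributions in a way that survives — yet $J_2$ appears in \eqref{errorrep} through $\langle J_2, \tilde\bphi\cdot\bn\rangle_e$. The resolution is that $\tilde\bphi = \bphi - \Pi_h\bphi$ has vanishing normal moments against $P_0(e)$ by \eqref{propRT}, so $\langle J_2, \tilde\bphi\cdot\bn\rangle_e = \langle J_2 - \bar J_2, \tilde\bphi\cdot\bn\rangle_e$, i.e. only the oscillation $J_2 - \bar J_2$ remains and is absorbed into $\osc_k(\cT_h)$. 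This orthogonality trick — exploiting the defining moment conditions of $\Pi_h$ to kill the leading-order $J_2$ contribution — is the key structural point that makes the $J_2$-free estimator reliable, and it is the step I expect to require the most care. The analogous (easier) point is that $r_3 = \curlt(A^{-1}\bsigma_h)$ and the $J_3$ jumps genuinely appear in $\eta$, so those terms are handled by the routine projection-plus-Cauchy--Schwarz argument above.
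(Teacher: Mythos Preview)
Your proposal is correct and follows essentially the same route as the paper: the first inequality is obtained from the definitions of $(\bsigma_h,u_h)$ and the triangle inequality, and the second combines the coercivity \eqref{coercivity}, the error representation of Lemma~\ref{rep}, Cauchy--Schwarz, and the interpolation bounds for $\tilde e,\tilde\bphi,\tilde\psi$. In particular, you have pinpointed exactly the structural step the paper singles out---using the $RT_0$ moment condition \eqref{propRT} to replace $\langle J_2,\tilde\bphi\cdot\bn\rangle_e$ by $\langle J_2-\bar J_2,\tilde\bphi\cdot\bn\rangle_e$ so that only oscillation survives---which is indeed the only nonroutine point in the argument.
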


\begin{proof}
The first inequality in (\ref{reliability}) is a direct consequence of 
the definition of $(\bsigma_h,\,u_h)$ and the triangle inequality. 

To show the validity of the second inequality in 
(\ref{reliability}), by the coercivity in (\ref{coercivity}), it suffices to show that
 \beq\label{5.19}
  b(\bE,e;\,\bE,e) 
 \leq C\,(\eta +\osc_k(\cT_h))\, \tri (\bE,e)\tri.
 \eeq
To this end, first  notice that by the property in (\ref{propRT}) and the definition of $\bar{J}_2$, we have 
 \[
 \langle J_2, (\bphi-\bphi_h)\cdot\bn\rangle_e = \langle J_2-\bar{J}_2,  (\bphi-\bphi_h)\cdot\bn\rangle_e. 
\]
Now, it follows from Lemma~\ref{rep}, the Cauchy-Schwarz inequality, and the approximation properties of $\tilde{e}$, $\tilde{\bphi}$, and $\tilde{\psi}$, and the triangle inequality that
\begin{eqnarray*}
  && b(\bE,e;\,\bE,e)\\[2mm]
 &=&
  \sum_{K\in\cT_h} \big( (r_1,\,\tilde{e})_\sK + (\br_2,\, \tilde{\bphi})_\sK 
 +  (r_3, \,\tilde{\psi})_\sK\big) 
 + \sum_{e\in\cE} \big( \langle J_1, \,\tilde{e}\rangle_e 
 + \langle J_2-\bar{J}_2, \tilde{\bphi}\cdot\bn\rangle_e+ \langle J_3, \tilde{\psi}\rangle_e\big)
  \\[2mm]
&\leq& \sum_{K\in\cT_h} \big(\|h\,r_1\|_{K}\,\|h^{-1} \tilde{e} \|_{K} 
 + \|h\,\br_2\|_{K}\,\|h^{-1}\tilde{\bphi}\|_{K}
 + \|h\, r_3\|_{K}\,\|h^{-1}\tilde{\psi}\|_{K} \big) \\[2mm]
  &+&  \sum_{e\in\cE_h} \left( \|h^{1/2}J_1\|_{e}\,\|h^{-1/2}\tilde{e}\|_{e} 
 +  \|h^{1/2} (J_2-\bar{J}_2)\|_{e}\,\|h^{-1/2} \tilde{\bphi}\cdot\bn\|_{e}
+ \|h^{1/2} J_3\|_{e}\,\|h^{-1/2}\tilde{\psi}\|_{e} \right) \\[2mm]
 &\leq& C\,\left(\sum_{i=1, 3}\big(\|h\,r_i\|_{\cT_h}^2+\|h^{1/2}J_i\|_{\cE_h}^2\big)
  + \|h\,\br_2\|_{\cT_h}^2
 +  \|h^{1/2} (J_2-\bar{J}_2)\|_{\cE_h}^2
 \right)^{1/2} \, \tri (\bE,e)\tri \\[2mm]
&\leq& C\,(\eta + \osc_k(\cT_h))\, \tri (\bE,e)\tri,
\end{eqnarray*}
which proves (\ref{5.19}) and, hence, the theorem.
\end{proof}

%

\begin{thm}\label{eff}
 {\em (Local Efficiency Bound)}
For all $K\in\cT_h$, the local error indicator $\eta_K$ defined in {\em (\ref{indicator})} is  efficient; i.e., there exists a positive constant $C$ such that 
\beq\label{efficiency}
C\,\eta_K \leq \|\bsigma-\bsigma_h\|_{\o_K} + \|u-u_h\|_{\o_K}+ \osc_k(\o_K),
\eeq
where $\o_K$ is the union of elements in $\cT_h$ sharing an edge with $K$.
\end{thm}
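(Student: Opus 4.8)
The plan is to prove (\ref{efficiency}) by the standard residual bubble--function technique (see \cite{Ver:96, Ver:13}), combined with the exact first--order relations (\ref{fos-s})--(\ref{bc-s}). The first step is to rewrite all residuals and jumps appearing in $\eta_K$ in terms of the errors $\bsigma-\bsigma_h$ and $u-u_h$. Since the exact pair satisfies $A^{-1}\bsigma+\grad u=\bzero$, $\gradt\bsigma-\bb\cdot A^{-1}\bsigma+a\,u=f$, and $\curlt(A^{-1}\bsigma)=\curlt(-\grad u)=0$ elementwise, while $\jump{\bsigma\cdot\bn}=0$ and $\jump{A^{-1}\bsigma\cdot\bt}=-\jump{\grad u\cdot\bt}=0$ across interior edges, $\bsigma\cdot\bn=g_\sN$ on $\Gamma_N$, and $\grad g_\sD\cdot\bt+A^{-1}\bsigma\cdot\bt=0$ on $\Gamma_D$, subtracting these from the defining formulas of $r_1$, $\br_2$, $r_3$, $J_1$, $J_3$ yields the exact identities
\[
r_1|_K=\gradt(\bsigma-\bsigma_h)-\bb\cdot A^{-1}(\bsigma-\bsigma_h)+a\,(u-u_h),\qquad
\br_2|_K=-A^{-1}(\bsigma-\bsigma_h)-\grad(u-u_h),
\]
\[
r_3|_K=-\curlt\!\big(A^{-1}(\bsigma-\bsigma_h)\big),\qquad
J_1|_e=\mp\jump{(\bsigma-\bsigma_h)\cdot\bn},\qquad
J_3|_e=\mp\jump{A^{-1}(\bsigma-\bsigma_h)\cdot\bt},
\]
with $J_1|_e=0$ on $\Gamma_D$ and $J_3|_e=0$ on $\Gamma_N$; note that $\eta_K$ contains no $\bar{J}_2$ term, so $J_2$ need not be estimated. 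It therefore suffices to bound each of the six quantities $\|h\bar{r}_1\|_K$, $\|h\bar{\br}_2\|_K$, $\|h\bar{r}_3\|_K$, and $\|h^{1/2}\bar{J}_1\|_e$, $\|h^{1/2}\bar{J}_3\|_e$ (for $e\subset\p K$) by the right-hand side of (\ref{efficiency}).

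For the three element terms I would use the interior element bubble $b_K$, namely the product of the barycentric coordinates of $K$, which is nonnegative and vanishes on $\p K$. By the norm equivalence on $P_k(K)$ one has $\|\bar{r}_1\|_K^2\le C\,(\bar{r}_1,\,b_K\bar{r}_1)_K$; splitting $(\bar{r}_1,b_K\bar{r}_1)_K=(r_1,b_K\bar{r}_1)_K+(\bar{r}_1-r_1,b_K\bar{r}_1)_K$, inserting the identity for $r_1$, and integrating the leading term by parts, $(\gradt(\bsigma-\bsigma_h),\,b_K\bar{r}_1)_K=-(\bsigma-\bsigma_h,\,\grad(b_K\bar{r}_1))_K$ (the boundary integral vanishing because $b_K|_{\p K}=0$), the inverse inequality $\|\grad(b_K\bar{r}_1)\|_K\le C\,h_K^{-1}\|\bar{r}_1\|_K$ then gives $\|h\bar{r}_1\|_K\le C\big(\|\bsigma-\bsigma_h\|_K+h_K\|u-u_h\|_K+\|h(r_1-\bar{r}_1)\|_K\big)\le C\big(\|\bsigma-\bsigma_h\|_K+\|u-u_h\|_K+\osc_k(K)\big)$. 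The same recipe applied to $\bar{\br}_2$, integrating $(\grad(u-u_h),\,b_K\bar{\br}_2)_K=-(u-u_h,\,\gradt(b_K\bar{\br}_2))_K$ by parts, and to $\bar{r}_3$, integrating $(\curlt(A^{-1}(\bsigma-\bsigma_h)),\,b_K\bar{r}_3)_K=(A^{-1}(\bsigma-\bsigma_h),\,\gperp(b_K\bar{r}_3))_K$ by parts, gives $\|h\bar{\br}_2\|_K\le C(\|\bsigma-\bsigma_h\|_K+\|u-u_h\|_K+\osc_k(K))$ and $\|h\bar{r}_3\|_K\le C(\|\bsigma-\bsigma_h\|_K+\osc_k(K))$. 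In each case the weight $h_K$ carried by the estimator is exactly what cancels the $h_K^{-1}$ produced by the inverse estimate.

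For the two edge terms I would use the edge bubble $b_e$, extended to the patch $\omega_e$ formed by the one or two elements sharing $e$, with $b_e$ vanishing on $\p\omega_e\setminus e$ and satisfying the usual scalings $\|b_e\bar{J}_i\|_K\le C\,h_e^{1/2}\|\bar{J}_i\|_e$ and $\|\grad(b_e\bar{J}_i)\|_K\le C\,h_e^{-1/2}\|\bar{J}_i\|_e$. Starting from $\|\bar{J}_1\|_e^2\le C\,(\bar{J}_1,\,b_e\bar{J}_1)_e$ and $J_1=\mp\jump{(\bsigma-\bsigma_h)\cdot\bn}$, I would integrate over the elements of $\omega_e$ to turn the edge integral into $\sum_{K\subset\omega_e}\big[(\gradt(\bsigma-\bsigma_h),\,b_e\bar{J}_1)_K+(\bsigma-\bsigma_h,\,\grad(b_e\bar{J}_1))_K\big]$; the quantity $\gradt(\bsigma-\bsigma_h)$ is not controlled by the flux error, so I would re-express it through the first--order system as $\gradt(\bsigma-\bsigma_h)=\bb\cdot A^{-1}(\bsigma-\bsigma_h)-a\,(u-u_h)+r_1$, after which Cauchy--Schwarz, the bubble scalings, and the element bound $\|hr_1\|_K\le\|h\bar{r}_1\|_K+\|h(r_1-\bar{r}_1)\|_K$ already obtained yield $\|h^{1/2}\bar{J}_1\|_e\le C(\|\bsigma-\bsigma_h\|_{\omega_e}+\|u-u_h\|_{\omega_e}+\osc_k(\omega_e))$. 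The term $\|h^{1/2}\bar{J}_3\|_e$ is handled in the same way with $\curlt$ replacing $\gradt$: the edge integral becomes $\sum_{K\subset\omega_e}\big[-(\curlt(A^{-1}(\bsigma-\bsigma_h)),\,b_e\bar{J}_3)_K+(A^{-1}(\bsigma-\bsigma_h),\,\gperp(b_e\bar{J}_3))_K\big]$, and using $\curlt(A^{-1}(\bsigma-\bsigma_h))=-r_3$ together with the element bound for $r_3$ gives $\|h^{1/2}\bar{J}_3\|_e\le C(\|\bsigma-\bsigma_h\|_{\omega_e}+\osc_k(\omega_e))$. Summing the six contributions over $e\subset\p K$ and over the elements of $\omega_K$, and using $\omega_e\subset\omega_K$ whenever $e$ is an edge of $K$, finally yields (\ref{efficiency}).

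The routine ingredients are the standard inverse and trace estimates for bubble functions and the bookkeeping of constants, which depend only on the shape regularity of $\cT_h$, the polynomial degree $k$, and the bounds on $A$, $\bb$, $a$ (if the coefficients are not piecewise polynomial, their variation is absorbed into the oscillation through $r_i-\bar{r}_i$ and $J_i-\bar{J}_i$). The genuinely delicate point, which I expect to be the main obstacle, is the treatment of the derivative quantities $\gradt(\bsigma-\bsigma_h)$, $\grad(u-u_h)$, and $\curlt(A^{-1}(\bsigma-\bsigma_h))$: each carries one derivative of the error and so is \emph{not} controlled by $\|\bsigma-\bsigma_h\|$ or $\|u-u_h\|$. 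They are disposed of either by integration by parts against a bubble function that annihilates the boundary contribution, or by re-expressing them through the first--order PDE and the \emph{previously established} element residual bounds for $r_1$ and $r_3$. Hence the order of the argument matters --- the element estimates must be proven before the edge estimates --- and one must check at each step that the $h$ and $h^{1/2}$ weights built into $\eta_K$ precisely balance the powers of $h^{-1}$ generated by the inverse inequalities.
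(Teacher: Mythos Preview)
Your proposal is correct and follows the same standard bubble--function strategy the paper uses. The only cosmetic difference is that the paper first derives a single error representation identity (their (\ref{errrep})), obtained by applying the argument of Lemma~\ref{rep} to an arbitrary test pair $(\btau,v)$ with quasi--Helmholtz decomposition $\btau=\bphi+\gperp\psi$, and then reads off each residual/jump bound by plugging in $v=\bar r_1\phi_K$, $\bphi=\bar{\br}_2\phi_K$, $\psi=\bar r_3\phi_K$, $v=\bar J_1\phi_e$, $\psi=\bar J_3\phi_e$; you instead write each residual and jump directly as an expression in $\bsigma-\bsigma_h$ and $u-u_h$ via the exact first--order relations and integrate by parts against the bubbles. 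These are two presentations of the same computation: the paper's identity already has the integration by parts built in, so choosing their test functions reproduces exactly your intermediate formulas. Your observation that the element bounds must precede the edge bounds (so that $\gradt(\bsigma-\bsigma_h)$ and $\curlt(A^{-1}(\bsigma-\bsigma_h))$ can be traded for $r_1$, $r_3$) is the correct substitute for the paper's device of absorbing those derivatives into the test--function slot of the bilinear form.
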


The proof of the local efficiency bound in Theorem \ref{eff} is standard; i.e., it is proved 
by using local edge and element bubble functions, $\phi_e$ and $\phi_K$ (see \cite{Ver:96} for their definitions and properties). For simplicity, we only sketch the proof below.

\begin{proof} 
For any $(\btau,\,v)\in H_N(\divvr;\O)\times H_D^1(\O)$, by the quasi-Helmholtz 
decomposition, we have
 \[ 
 \btau =\bphi +\gperp \psi \in H_N(\divvr;\O),
 \]
where $\bphi \in H_N^1(\O)$ and $\psi \in H_N^1(\O)$.
The same argument as the proof of Lemma~\ref{rep} gives
$$
b(\bE,e; \btau, v) = \sum_{K\in\cT_h} \left\{ (r_1, v)_\sK + (\br_2, \bphi)_\sK +  (r_3, \psi)_\sK\right\}
+ \sum_{e\in\cE_h} \left\{\langle J_1, v\rangle_e + \langle J_2, \bphi\cdot\bn\rangle_e+ \langle J_3, \psi\rangle_e\right\},
$$
which, together with the definitions of $\bsigma$ and $u$, yields
$$
b(\bE,e; \btau, v) =
\left(\bsigma-\bsigma_h,\,A^{-1}\btau - (\grad+A^{-1}\bb)\,v \right) 
   - (u-u_h,\, \gradt\,\btau-a\, v).
$$
Hence,
\begin{eqnarray}\nonumber
&&    \sum_{K\in\cT_h} \left( (r_1, v)_\sK + (\br_2, \bphi)_\sK +  (r_3, \psi)_\sK\right)
+ \sum_{e\in\cE_h} \left( \langle J_1, v\rangle_e + \langle J_2, \bphi\cdot\bn\rangle_e+ \langle J_3, \psi\rangle_e\right)\\[2mm]
\label{errrep}
&=&\left(\bsigma-\bsigma_h,\,A^{-1}(\bphi +\gperp \psi) - (\grad+A^{-1}\bb)\,v \right) 
   - (u-u_h,\, \gradt\,\bphi-a\, v).
\end{eqnarray}
In (\ref{errrep}), by choosing (1) $\bphi=0$, $\psi=0$, and $v= \bar{r}_1\,\phi_K$;
(2) $\bphi= \bar{\br}_2\,\phi_K$, $\psi=0$, and $v=0$; 
and (3) $\bphi=0$, $\psi=  \bar{r}_3\,\phi_K$,  and $v=0$
and by the standard argument, we can then establish
upper bounds for the element residuals, 
$\|h\,\bar{r}_1\|_K$, $\|h\,\bar{\br}_2\|_K$, and $\|h\,\bar{r}_3\|_K$, respectively.
In a similar fashion, 
to bound the edge jumps $\|h\,\bar{J}_1\|_e$ and $\|h\,\bar{J}_3\|_e$ above,
we choose (1) $\bphi=0$, $\psi=0$, and $v = \bar{J}_1\,\phi_e$
and (2) $\bphi=0$, $\psi= \bar{J}_3\,\phi_e$, and $v=0$ in (\ref{errrep}), respectively.
\end{proof}


\section{Numerical Results}

In this section, numerical results for a second order elliptic partial differential 
equation are presented. 

We begin with discretizations of a test problem on a sequence of uniform
meshes to verify the a priori error estimation. The test problem is defined on 
$\O = (0,1)^2$ with coefficients $A = I$, $\bb = (3,2)^t$, and $a=2$. The exact 
solution of this problem is $u=\sin(\pi x)\sin(\pi y)$ 
with homogeneous boundary condition on $\p \O$. 
Finite element spaces $\Sigma_h^0$ 
and $V_h^1$ are used to approximate $\btau$ and $w$, respectively. 
Table 6.1 shows that the convergence rates of the errors for the original variables $\bsigma$ and $u$ in $L^2$-norms are optimal.

\begin{table}[htdp]
\caption{Errors $\|\bsigma-\bsigma_h\| $, $\|u-u_h\| $, $\|\bsigma-\bsigma_h\| +\|u-u_h\| $, and convergence rates }
\begin{center}
\begin{tabular}{|c|c|c|c|c|c|c|}
\hline
h & $\|\bsigma-\bsigma_h\| $ & rate & $\|u-u_h\| $ & rate & $\|\bsigma-\bsigma_h\| +\|u-u_h\| $& rate\\
\hline
1/8 & 5.859E-1 &&     4.351E-2 && 6.294E-1& \\
\hline
1/16& 2.972E-1 & 1.971 & 1.601E-2 & 2.718&3.132E-1&2.010 \\
\hline
1/32&1.492E-1&1.992& 7.013E-3&2.283&1.562E-1&2.005 \\
\hline
1/64&7.466E-2&1.998& 3.367E-3&2.013&7.802E-2 & 2.002\\
\hline
1/128&3.734E-2&2.000&1.666E-3& 2.201&3.900E-2 &2.006\\
\hline
\end{tabular}
\end{center}
\label{default}
\end{table}%

The next example is to test the a posteriori error estimator. 
The test problem is the Laplace equation $-\Delta u =0$ defined on
an L-shaped domain $\O := (-1, 1)^2 \backslash {[0, 1)\times(-1, 0]}$
with a reentrant corner at the origin. The Dirichlet boundary condition $u|_{\p\O}=g_\sD$
is chosen such that the exact solution is
$
 u(r,\theta) = r^{2/3} \sin(2\theta)
$
in polar coordinates. 
Starting with the coarsest triangulation 
$\cT_0$ obtained from halving $12$ uniform squares, a sequence of meshes
is generated by using the standard adaptive meshing algorithm that
adopts the bulk marking strategy with bulk marking parameter $0.5$.
Marked triangles are refined by bisection.

\begin{figure}[!hts]
    \hfill
    \begin{minipage}[!hbp]{0.48\linewidth}
        \centering
        \includegraphics[width=0.99\textwidth,angle=0]{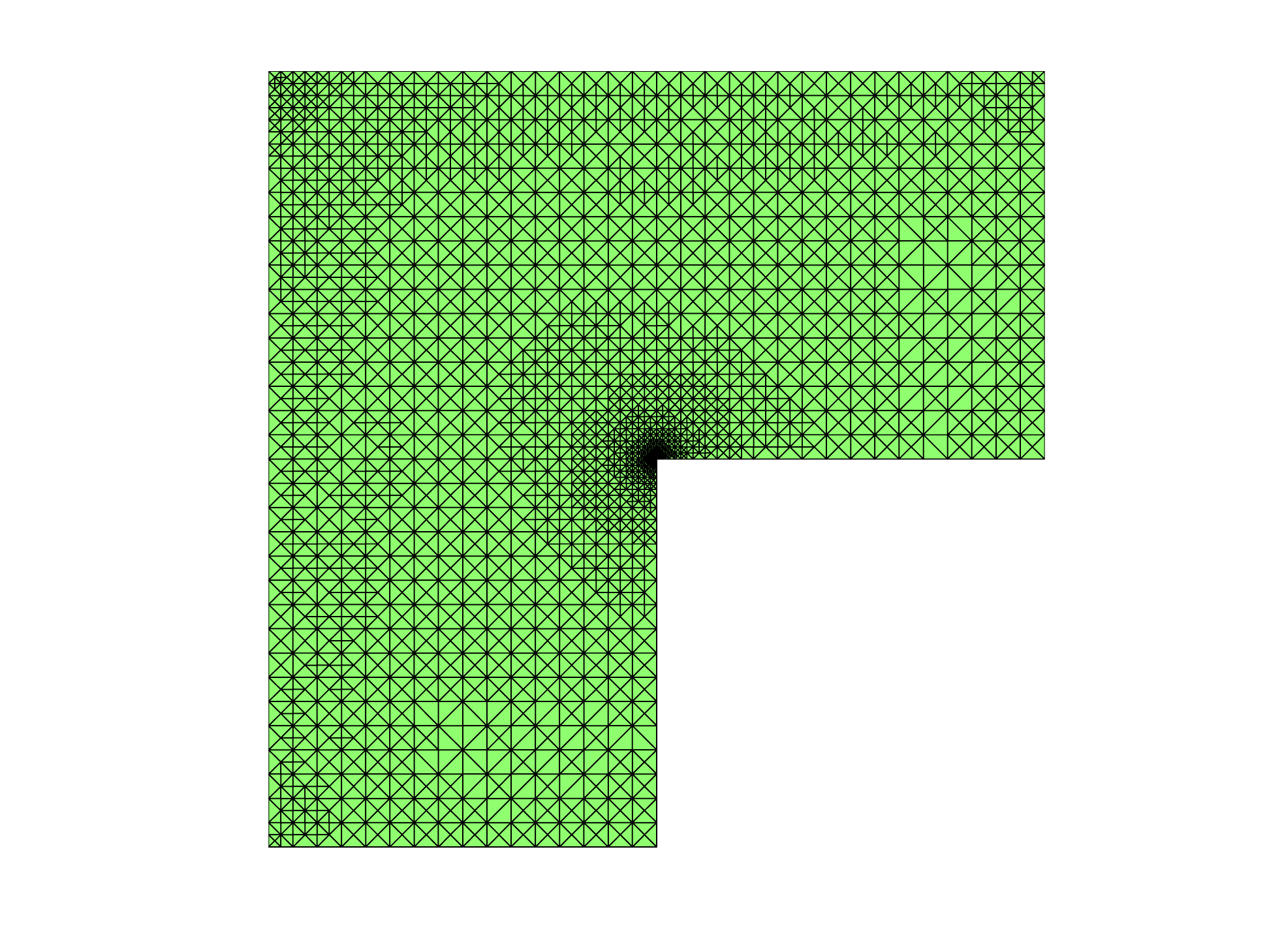}
        \caption{mesh generated by error estimator}%
        \end{minipage}%
        \quad
    \begin{minipage}[!htbp]{0.48\linewidth}
        \centering
        \includegraphics[width=0.99\textwidth,angle=0]{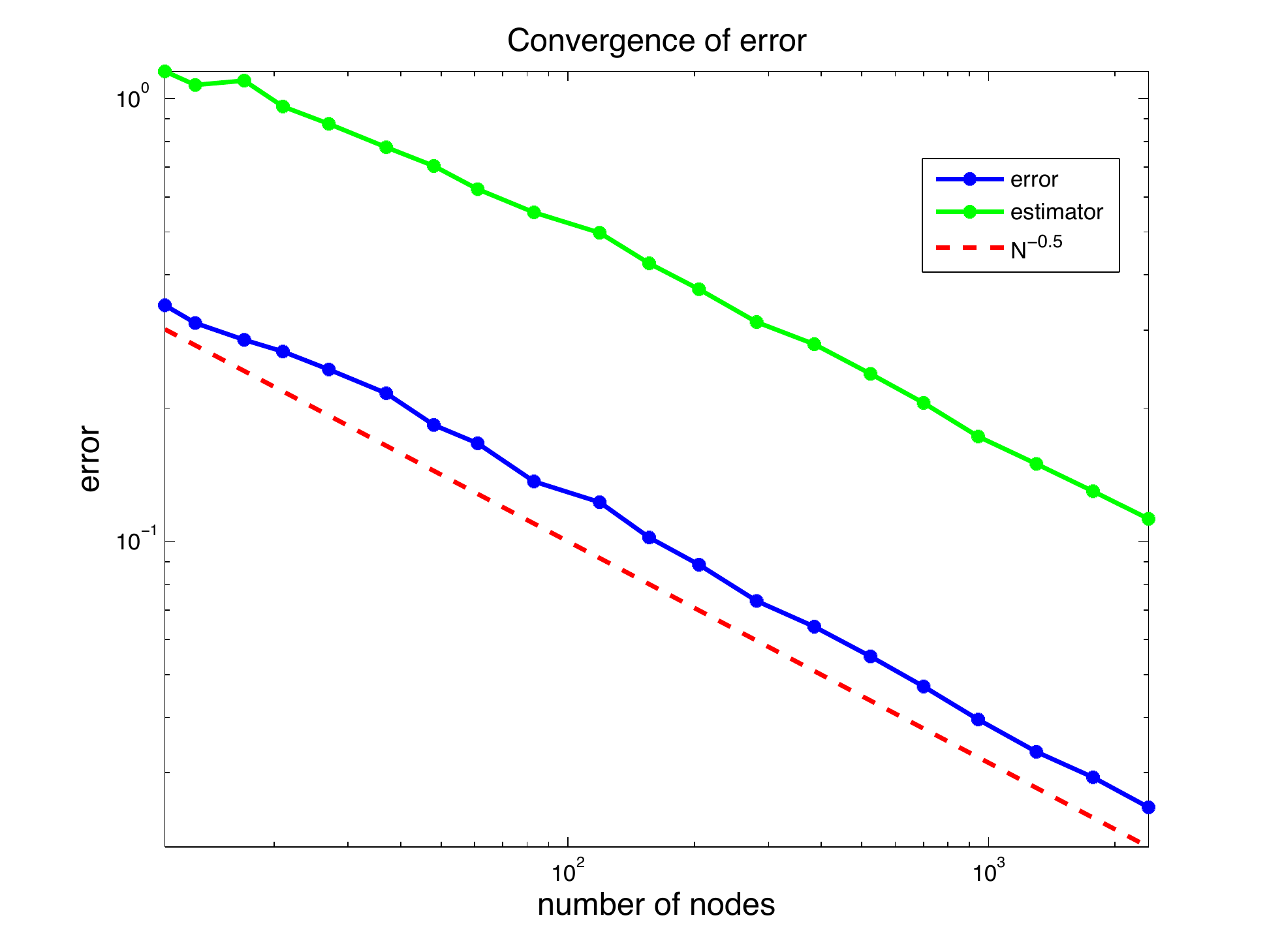}
        \caption{error $\|\bsigma-\bsigma_h\| +\|u-u_h\| $ and estimator}%
    \end{minipage}%
        \hfill
\end{figure}

Mesh generated by $\eta$ is shown in Figure 6.1. 
The refinement is mainly around the reentrant corner.
Comparison of the true error and the $\eta$ is shown in Figure 6.2.
Moreover, the slope of the log(dof)- log(error) 
for the $\eta$ and the true error is $-1/2$, which indicates the optimal decay of 
the error with respect to the number of unknowns.

\bigskip
\bibliographystyle{siam}

\end{document}